\newtheorem{theorem}{Theorem}
\newtheorem{corollary}[theorem]{Corollary}
\newtheorem{definition}[theorem]{Definition}
\newtheorem{example}[theorem]{Example}
\newtheorem{lemma}[theorem]{Lemma}
\newtheorem{proposition}[theorem]{Proposition}
\newtheorem{remark}[theorem]{Remark}
\title{An iterative process for approximating subactions}
\author{ Hermes H. Ferreira, Artur O. Lopes and Elismar R. Oliveira\\  Instituto de Matem\'atica e Estat\'istica - UFRGS, Brazil \\ }
\begin{document}

\maketitle

\begin{abstract}
We describe a procedure based on the iteration of  an initial function  by an appropriated operator, acting on continuous functions,  in order to get  a fixed point. This  fixed point  will be a calibrated subaction for the doubling map on the circle and a fixed Lipschitz potential. We study analytical and generic properties of this process and we provide some computational evaluations of subactions using a discretization of the circle.  The fixed point is unique if the maximizing probability is unique. We proceed a careful analysis of the dynamics of this operator close by the fixed point in order to explain the difficulty in estimating its asymptotic behavior.  We will show that  the  convergence rate can be in some moments  like $1/2$  and sometimes arbitrarily close to $1$.
\end{abstract}
	
\section{Introduction} \label{int}
	
Here we analyze  some properties of an iterative process (applied to an initial function)  designed for approximating subactions. Properties for a general form of such kind of algorithm were considered in \cite{Kra}, \cite{Ma1},  \cite{Dot2} and \cite{Ishi} (see also \cite{Bachar} for more recent results).  We analyze here the performance of a specific version of the algorithm which is useful in Ergodic Optimization.

 In a companion paper \cite{FLO} we will consider several examples. The sharp numerical evidence obtained from the algorithm permits to guess  explicit expressions for the subaction.

\medskip

We identify  $\mathbb{R}/\mathbb{Z}$ as $S^1$ and $T(x)=2x$ the doubling map. We denote by $\tau_i(x)=\frac{1}{2} (x+i-1), \; i=1,2$  the two inverse branches of $T$.
	\medskip

	\begin{definition}\label{max} Given a continuous function $A:S^1\to \mathbb{R}$ (or, $A:[0,1]\to \mathbb{R}$) we denote by
		$$m(A)= \sup_{ \rho\, \text{invariant for}\, T} \int A \, d \rho.$$	
		Any invariant probability $\mu$ attaining such
		supremum is called a {\bf  maximizing probability}.
	\end{definition}

The properties of the maximizing probabilities $\mu$ are the main interest  of Ergodic Optimization (see \cite{BLL}, \cite{G1}, \cite{CLT},\cite{J2}, \cite{J3} \cite{J4}, \cite{J6})

In Statistical Mechanics the limits of equilibrium probabilities when temperature goes to zero (see \cite{BLL})  are called ground states (they are maximizing probabilities).

\smallskip

A interesting line of reasoning is the following: there is a theory, someone gives a particular example which  leads to a problem to solve, then, use the theory to exhibit the solution. Is there a general procedure to find the solution of  this kind of problem? Here we will address this kind of query on the present setting.

\smallskip

	\begin{definition}\label{sub1} Given the Lipschitz continuous function $A:S^1\to \mathbb{R}$ the union of the supports of all the maximizing probabilities  is called the {\bf Mather set} for $A$.
		
	\end{definition}

	We will assume from now on that $A$ is Lipschitz continuous and that the maximizing probability is unique.

	It is known that for a generic Lipschitz potential $A$ (in the Lipschitz norm) the maximizing probability is unique and has support on a $T$-periodic orbit (see \cite{Con} and \cite{CLT}).
We do not have to assume here that the unique maximizing probability has support on a unique  periodic orbit.

	\begin{definition}\label{sub} Given the Lipschitz continuous function $A:S^1\to \mathbb{R}$, then
		a continuous function $u: S^1 \to \mathbb{R} $ is called a
		{\bf calibrated subaction} for $A$, if, for any $x\in S^1$, we have
		
		\begin{equation}\label{c} u(x)=\max_{T(y)=x} [A(y)+ u(y)-m(A)].\end{equation}
		
	\end{definition}
	
	Note that if $u$ is a calibrated subaction for $A$ then $u$ plus a constant is also a calibrated subaction for $A$.

For  Lipschitz potentials $A$ there exists Lipschitz calibrated subactions (see \cite{CLT}, \cite{Bou1}).
If the {\bf maximizing probability is  unique} (our assumption) then the {\bf  calibrated subaction is unique} up to adding a constant (see \cite{CLT} or \cite{GL1}).

Calibrated subactions play an important role in Ergodic Optimization (see \cite{BLL}, \cite{Sav} and \cite{G1}). From an explicit calibrated subaction one can guess where is the support of the maximizing probability. Indeed, given $u$ we have that for all
$x \in S^1$.
 \begin{equation}\label{x1} R(x)\,:=\, u(T(x)) -  u(x) - A(x) + m(A) \geq 0,\end{equation}
 and, for any point $x$ in the Mather set $R(x)=0.$ Moreover, if an invariant probability has support inside the set of points where $R=0$, then, this probability is maximizing (see \cite{CLT}).

In \cite{BLM} it is presented explicit expressions for the subaction in some nontrivial cases.

\begin{example}  \label{ex} We show in Figure  \ref{fig:jher}  the  graph of a potential  $A$, the graph of  the calibrated subaction $u$ and the graph of $R$.
The potential $A$ is zero at the points $1/4,3/4$ and it is equal to $-1$ in the points $0,1/2,1$.  The   set  $\{1/3,2/3\}$ is contained on the Mather set (then, it is the support of a maximizing probability)  and $m(A)= - \,1/3.$
  The calibrated subaction is $0$ at the point $1/2$ and equal to $2/3$ at the points $0,1$. The function $R$ is equal to $2/3$ at the points $0,1$ and it is equal to zero on the interval $[1/4,3/4]$. We point out that we easily guessed the explicit expression for the subaction $u$ from the picture obtained from the application  of the algorithm on the initial condition $f_0=0$.

\end{example}

\begin{figure}[h]
  \centering
  \includegraphics[scale=0.37]{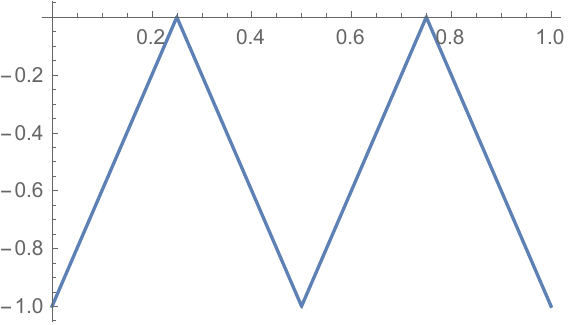}\qquad
  \includegraphics[scale=0.37]{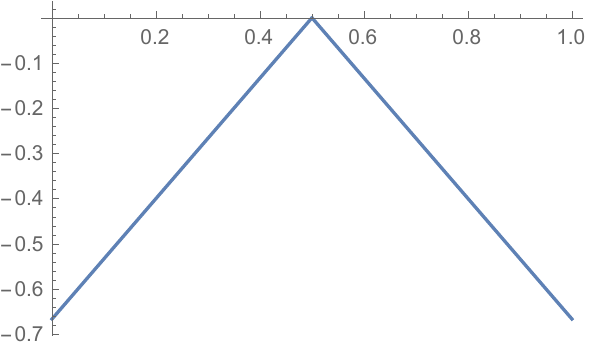}\qquad
  \includegraphics[scale=0.37]{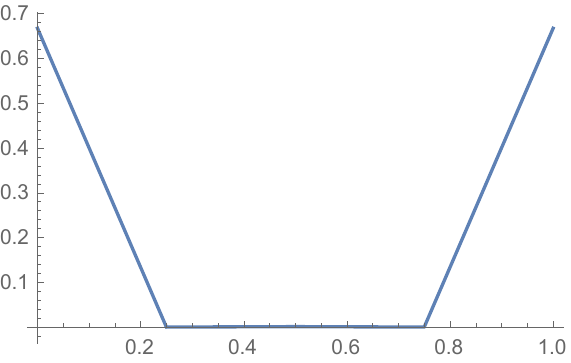}
  \caption{ From left to right: the  graph of the potential  $A$, the graph of  the calibrated subaction $u$ and the graph of $R$. }\label{fig:jher}
\end{figure}

	Given $x$, then, $u(x) = A (\tau_j (x)) + u (\tau_j(x)) - m(A)$, for some $j=1,2$. We say that $\tau_j(x)$ is a {\bf  realizer} for $x$. There are some points $x$ that eventually get at the same time two realizers.
	
	\medskip
	
	We are interested in an iteration procedure for getting  a good approximation of the subaction
	in the case the maximizing probability is unique. As a byproduct we will also get the value $m(A)$. This will help to get $R$ (as above) and eventually to find the support of the maximizing probability.

	We will consider a map  $\mathcal{G}$ acting on functions  such that the subaction $u$ is the unique  fixed point (we will have to consider the action on continuous functions up to an additive constant).  Unfortunately, $\mathcal{G}$ is not a strong contraction but we know that $\lim_{n \to \infty} \mathcal{G}^n (f_0)=u$ (for any given $f_0$).
	The performance of the iteration procedure is quite good and one can get easily  nice approximations.
	
	We explore here in section \ref{gen} the generic point of view on the set of continuous functions.  Given a fixed Lipschitz potential $A$ we will show  generic properties for the iterative process acting on continuous functions. In this direction expression (\ref{iimp}) (\, and (\ref{iimpil})\,) in Theorem \ref{tyy}, Theorem \ref{ty},  Corollary \ref{cor}
	and also expression (\ref{equimp}) in Remark 3  will provide this, and, therefore justify the excellent performance one can observe for  the iterative process  which we will describe here.
	
	A natural question: when the calibrated subaction is unique  is there an uniform exponential speed of  approximation (or, something numerically good)  of the iteration $\mathcal{G}^n (f_0)$ to the subaction?
	At least close by the subaction?
	In section \ref{close} we present a very detailed analysis of the action of the map $\mathcal{G}$ close by the fixed point $u$ and we will show that this is not the case.
	We will consider in Example \ref{E3}  a case where   where $|\mathcal{G}(f_{\varepsilon}) -\mathcal{G}(u )| =      |f_{\varepsilon} -u |$ , $\varepsilon>0$, for $f_{\varepsilon}$  as close as you want to the calibrated subaction $u$. In the positive direction one can also show that close by $u$ there are other $g_\varepsilon$, $\varepsilon>0$, such that,
	$|\mathcal{G}(g_{\varepsilon}) -\mathcal{G}(u )| =  1/2\,    |g_{\varepsilon} -u |$
	(see Corollary \ref{ftp}).

\begin{remark}
We emphasize the fact that in our computational evaluations we are not going to consider numerical aspects of this iteration process as rate of convergence, complexity or comparative efficiency with respect to other numerical schemes. First  because it is not our goal and more important because, as we are going to prove,   there exist a generic obstruction to get an analytical precise estimate for the convergence nearby the fixed point. We will show (see section \ref{close})  that  the  convergence rate can be in some moments  like $1/2$ (at each iteration) and sometimes arbitrarily close to $1$ (at each iteration).

 For related numerical computations we refer the reader to \cite{DOS1} and \cite{DOS2}. In these two papers the authors  define a general rigorous approach to discretize points on an interval (considering a finite lattice of points)  and also to discretize the action of some operators similar to the ones we will consider here. The aim is to find controlled approximations of a fixed point function for this discretized operator acting on a discrete lattice. One could employ the same ideas here with the appropriate adaptation but this is not the purpose of the present paper.

\end{remark}	
	
{\bf One final comment}: there are two major settings that people  analyze questions in Ergodic Optimization: 1) when it is assumed the potential is just continuous, and, 2) when  it is assumed some regularity (as Lipschitz for instance) on the potential.
The two cases are conceptually distinct: in the first case, generically, the maximizing probability has support on the all space (see \cite{Bou2} and \cite{J2}) and in the second case, generically, the support has support on a periodic orbit (see \cite{Con}  and \cite{CLT}). In the first case, generically, subactions are of no help. It is in the second case that subactions are of great help for identifying the support of the maximizing probability. In our work we  introduce  a nice tool for  identifying, generically,   the maximizing probability (see \cite{FLO}).

\section{The 1/2 iterative procedure} \label{1/2}
	
	On the set of continuous functions $f:S^1 \to \mathbb{R}$ we consider
	the sup norm: $|f|_0= \sup \{|f(x)|, \, x \in S^1 \}$. This set is denoted by $C^0=C^0(S^1 , \mathbb{R})$.
		
	\begin{definition} In $C^0(S^1 , \mathbb{R})$ we consider the equivalence relation
		$f \sim g$, if $f-g$ is a constant.		The set of classes is denoted by  $\mathcal{C}= C^0 /\mathbb{R}$ and, by convention, we
		will consider in each class a representative which
		has supremum equal to zero.
	\end{definition}

	In $\mathcal{C}$ we consider the quotient  norm (see section 7.2 in \cite{Ma})
	$$ |f|= \inf_{\alpha \in \mathbb{R} } | f + \alpha|_0.$$
	We can also consider this norm $ |f| $ restricted to set of   Lipschitz functions in  $\mathcal{C}.$
	$(\mathcal{C}, |\,\cdot \,|)$  is a Banach space (see \cite{Ma}) .	
	 As $S^1$ is compact  we get that: for any given $f$ there exists $\alpha$, such that, $|f|= |f+\alpha|_0$.

	We denote sometimes the constant $\alpha$ associated to $f$ by $\alpha_f:=-\frac{\max f + \min f}{2}$.
	We point out that when we write $|f(x)|$ this means the modulus of an element in $\mathbb{R}$ and $|f|$ means the norm defined above.
		
		\medskip

	\begin{definition}\label{op1} Given a Lipschitz  continuous function $A:S^1\to \mathbb{R}$ we consider the operator (map) $\hat{\mathcal{L}}= \hat{\mathcal{L}}_A$, such that, for $f: S^1 \to \mathbb{R} $, we have   $\hat{\mathcal{L}}_A(f)=g$, if
		
		\begin{equation}\label{d1} \hat{\mathcal{L}}_A(f)(x)=   g(x)=\max_{T(y)=x} [A(y)+  f(y) - m(A)].\end{equation}
		
		for any $x\in S^1$.
		
	\end{definition}
	
	For the given Lipschitz  continuous function $A:S^1\to \mathbb{R}$ the operator  $\hat{\mathcal{L}}_A$ acts in $\mathcal{C}$ as well as in $\mathcal{C}_0$.

\begin{figure}[h] \label{fig:Graf}
\center
\includegraphics[height=2cm,width=4cm, trim=0.1in 0.08in 0.1in 0.08in, clip]{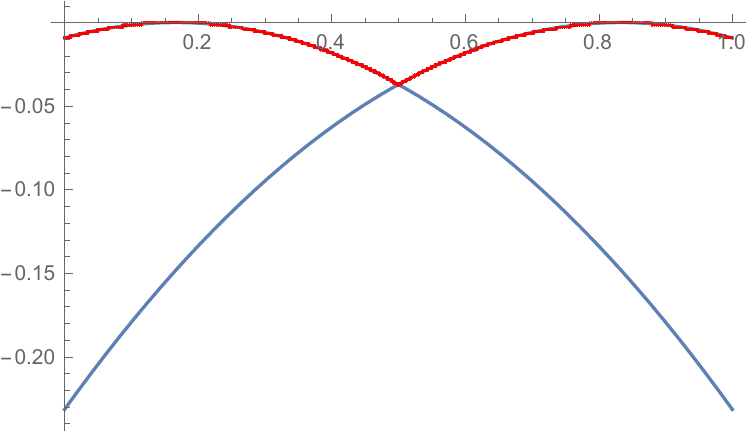}
	\caption{Case $A(x)= - (x-1/2)^2$  and $T(x)=- 2 x$ (mod 1) - In this case $m(A)= -1/36$. The red graph  describes the values of the approximation  (via $1/2$-algorithm) to the calibrated subaction $u$  given by $\mathcal{G}^{10}(0)$   (using the language $C^{++}$ and a mesh of points) and the two blue graphs  describe,  respectively, the graphs of $ x\, \to\, - 1/3 x^2 + 1/9 x ,\,\, $ and $x\,\to \, - 1/3 x^2 +5/9 x - 2/9$. The supremum of these two functions is the exact analytical expression for  the graph of the calibrated subaction $u$. The red color obliterates the blue color.}
	 \label{fig:Graf}
\end{figure}
	
	Note that $u$ is a fixed point  for such operator $f \to \hat{\mathcal{L}}_A(f)$, if and only if, $u$ is a calibrated subaction. It is well known there exists calibrated subactions when  $A$ is of Lipschitz  class (see for instance \cite{BLL}).

	One could hope that a high iterate $\hat{\mathcal{L}}_A^n(f_0)$ $(n$ large) would give an approximation of the calibrated subaction.
This operator will not be very helpful  because we have to known in advance the value $m(A)$. Even if we know the value $m(A)$ the iterations
	$\hat{\mathcal{L}}_A^n(f_0)$ applied  on an initial continuous function $f_0$ may not converge. This can happen even in the case the calibrated subaction is unique.

	\begin{definition}\label{op1} Given a Lipschitz continuous function $A:S^1\to \mathbb{R}$ we consider the operator (map) $\mathcal{L}= \mathcal{L}_A: \mathcal{C} \to \mathcal{C}$, such that, for $f: S^1 \to \mathbb{R} $, we have   $\mathcal{L}_A(f)=g$, if
		\begin{equation}\label{e1} \mathcal{L}_A(f)(x)=   g(x)=\max_{T(y)=x} [A(y)+  f(y)] - \sup_{s \in S^1} \{\max_{T(r)=s} [A(r)+  f(r)] \} .\end{equation}
		for any $x\in S^1$.
		
	\end{definition}

	The advantage here is that we do not have to know the value $m(A)$.
	In the same way as before $u$ is a fixed point  for the operator
	$\mathcal{L}_A(f)$, if and only if, $u$ is a calibrated subaction.

We call the iterative  procedure (defined below and denoted by $\mathcal{G}$)  the $1/2$-iterative process. It is a particular case of the iteration procedure described on \cite{Ma1} and \cite{Ishi}. From these two papers it follows that given any initial function $f_0\in \mathcal{C}$ we have that
$ \lim_{n \to \infty} \mathcal{G}^n (f_0)$ exists and it is the subaction $u$ (which belongs to $\mathcal{C}.$)

\medskip

{\bf Remark 2:}	The iterations
	$\mathcal{L}_A^n(f_0)$ applied  on an initial continuous function $f_0$ may not converge. This can happen even in the case the calibrated subaction is unique as some examples can show.  The bottom line is: we have to use $\mathcal{G}$ and not $\mathcal{L}_A$.
		$\hfill \diamondsuit$
	\medskip

	In order to show the power of the approximation scheme we consider an example where the subaction $u$ was already known. The dynamics is $T(x)=- 2 x$ (mod 1) (\,not $T(x)= 2 x$ (mod 1)\,). The $1/2$-algorithm works also fine in this case. According to example 5 in pages 366-367 in \cite{LOT} the subaction $u$ (see picture on page 367 in \cite{LOT}) for the potential $A(x)= - (x-1/2)^2$ is
$$u(x)=	\max \{\,  - 1/3 x^2 + 1/9 x ,\,\, - 1/3 x^2 +5/9 x - 2/9\,\}.$$

More generally, in page 391 in \cite{LOT} is described a natural procedure to get the subaction $u$ for potentials $A$ which are quadratic polynomials.
The maximizing probability $\mu$ in this case has support on the orbit of period two (according to \cite{J3}, \cite{J4} and \cite{J6}) and $m(A)=-1/36$.
One can see from Figure \ref{fig:Graf} a perfect match of the solution obtained from the algorithm described by $\mathcal{G}$ and the graph of the exact calibrated subaction $u$.

	\begin{definition}\label{op1}  Given a Lipschitz continuous function $A:S^1\to \mathbb{R}$ we consider the operator (map) $\mathcal{G}= \mathcal{G}_A: \mathcal{C} \to \mathcal{C}$, such that, for $f: S^1 \to \mathbb{R} $, we have   $\mathcal{G}_A(f)=g$, if
	$$\mathcal{G}_A(f)(x)=   g(x)=\frac{\max_{T(y)=x} [A(y)+  f(y)] + f(x)}{2} \,\,\,\,-c_f$$
	for any $x\in S^1$,
where
\begin{equation}\label{c1}c_f:= \sup_{s\in S^1} \frac{\max_{T(r)=s} [A(r)+  f(r)] + f(r)}{2} .\end{equation}	
\end{definition}
We will show later in Theorem \ref{ty1}  that $\,| \mathcal{G}(f)-  \mathcal{G}(g)|\leq |f-g|   $, for any $f,g \in \mathcal{C}.$ Therefore, $\mathcal{G}$ is Lipschitz continuous.

The operator $\mathcal{G}$ is not linear. As we already mentioned we called the procedure based on high iterations $ \mathcal{G}^n (f_0)$ the $1/2$ iterative procedure.

\medskip

The above Definition \ref{op1} was inspired by expressions (5.1) and (5.2) of \cite{Chou}. This is a particular case of a more general kind of numerical iteration procedure known as the Mann iterative process (see \cite{Ma1}, \cite{Dot2}, \cite{Kra}, \cite{Ishi} and \cite{Ma2}).

 Assuming that the subaction $u$ for the Lipschitz potential $A$ is unique (up to adding constants) it follows (as  particular case) from the general results  of W. Dotson, H. Senter and  S. Ishikawa (see   Corollary 1 in \cite{Dot2}, \cite{Ma1} or \cite{Ishi}) that  $\lim_{n \to \infty} \mathcal{G}^n (f_0)=u,$
 for any given $f_0\in \mathcal{C}$.

 The special
$\mathcal{G}$ presented above was not previously consider in the literature (as far as we know).
	
	Note that $\mathcal{G}_A(f+c)= \mathcal{G}_A(f)$ if $c$ is a constant and also that for any $f$ the supremum of  $\mathcal{G}_A(f)$ is equal to $0.$

When running the iteration procedure on a computer (using the language $ C^{++}$) one fix a mesh of points in $[0,1]$ and perform the operations on each site. The pictures we will show here are obtained in this way when we consider a large number of points equally spaced.

One important issue on the companion paper \cite{FLO} with explicit examples is corroboration. By this we mean:
we derive analytically some complicated expressions and we use the algorithm to compare
and confirm that our reasoning was correct.

The next proposition is a direct consequence fo the definition of $\mathcal{G}_A$ but we will present a proof for the benefit of the reader.

	\begin{proposition}
		
		If $u$ is such that $\mathcal{G}_A(u)=u$, then,
		$u$ is a calibrated subaction and
		\begin{equation}\label{c12} m(A)= \sup_{z} \max_{T(y)=z} [A(y)+  u(y)] + u(z) .\end{equation}
		
	\end{proposition}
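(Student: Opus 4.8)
The plan is to start from the fixed-point equation $\mathcal{G}_A(u)=u$ and unwind the definition of $\mathcal{G}_A$. Writing $g(x) := \max_{T(y)=x}[A(y)+u(y)]$ and $c_u := \sup_{s}\frac{g(s)+u(s)}{2}$, the hypothesis $\mathcal{G}_A(u)=u$ reads pointwise as $\frac{g(x)+u(x)}{2}-c_u = u(x)$, i.e. $g(x)+u(x) = 2u(x)+2c_u$, which simplifies to $g(x) = u(x) + 2c_u$ for every $x \in S^1$. So the single scalar $2c_u$ plays the role of the constant $m(A)$: we get $\max_{T(y)=x}[A(y)+u(y)] = u(x) + 2c_u$ for all $x$.

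Next I would identify $2c_u$ with $m(A)$ and thereby conclude $u$ is a calibrated subaction. From $g(x) = u(x)+2c_u$ it is immediate that $c_u = \sup_s \frac{g(s)+u(s)}{2} = \sup_s \frac{2u(s)+2c_u}{2} = \sup_s u(s) + c_u$, forcing $\sup_s u(s) = 0$ (consistent with the normalization convention in $\mathcal{C}$, and in any case harmless since subactions are defined up to a constant). To pin down $2c_u = m(A)$, I would integrate the relation $\max_{T(y)=x}[A(y)+u(y)] = u(x)+2c_u$ against an arbitrary $T$-invariant probability $\rho$: since $\max_{T(y)=x}[A(y)+u(y)] \ge A(y)+u(y)$ whenever $T(y)=x$, a standard argument (choosing, for $\rho$-a.e.\ point, a preimage realizing the relation, or more cleanly using that $\int \max_{T(y)=x} h(y)\, d\rho(x) \ge \int h\, d\rho$ for any $h$ with the convention via the pushforward) gives $\int A\, d\rho \le 2c_u$, hence $m(A) \le 2c_u$; and picking $\rho$ the maximizing probability together with the fact that on its support the max is attained along the orbit yields the reverse inequality $2c_u \le m(A)$. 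Substituting $m(A) = 2c_u$ back into $\max_{T(y)=x}[A(y)+u(y)] = u(x) + 2c_u$ gives exactly $u(x) = \max_{T(y)=x}[A(y)+u(y)-m(A)]$, i.e.\ equation~\eqref{c}, so $u$ is a calibrated subaction. Finally, $2c_u = m(A)$ combined with the definition \eqref{c1} of $c_f$ (noting that at a fixed point the $f(r)$ appearing there becomes $u(z)$ after relabeling the supremum variable) is precisely the claimed formula \eqref{c12}: $m(A) = \sup_z \big(\max_{T(y)=z}[A(y)+u(y)] + u(z)\big)$, using once more that $g(z)+u(z) = 2u(z)+2c_u$ and $\sup_z u(z)=0$.

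The main obstacle is the identification $2c_u = m(A)$: the inequality $m(A)\le 2c_u$ needs the measurable-selection-type argument to produce, for a generic point of an invariant measure, a preimage at which the defining maximum is achieved (or equivalently an appeal to the fact that the transfer-type operator's fixed-point constant is forced to be the ergodic maximum — this is the content of the classical subaction theory, e.g.\ \cite{CLT}, \cite{BLL}), while the reverse inequality is the easy direction. Everything else is algebraic manipulation of the definitions. I would be careful to keep track of the normalization $\sup u = 0$ so that the constant in \eqref{c12} comes out correct rather than shifted.
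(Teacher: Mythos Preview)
Your algebraic unwinding of $\mathcal{G}_A(u)=u$ to reach $u(x)=\max_{T(y)=x}[A(y)+u(y)]-2c_u$ is correct and is exactly what the paper does; the paper then simply cites \cite{BCLMS} (end of the proof of Theorem~11 there) for the identification $2c_u=m(A)$, so your proposal is in fact more explicit than the paper on that point.

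However, in your sketch of $2c_u=m(A)$ you have the easy and hard directions reversed. The inequality $m(A)\le 2c_u$ requires no measurable selection: from $u(T(y))+2c_u=\max_{T(z)=T(y)}[A(z)+u(z)]\ge A(y)+u(y)$ one gets $A(y)\le u(T(y))-u(y)+2c_u$ for every $y$, and integrating against any $T$-invariant $\rho$ gives $\int A\,d\rho\le 2c_u$ immediately. The direction that needs work is $2c_u\le m(A)$, and your proposed argument for it (``on the support of the maximizing probability the max is attained along the orbit'') is circular as stated---that property of the support is a \emph{consequence} of $u$ being a calibrated subaction with constant $m(A)$, which is what you are proving. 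The standard fix is to iterate backwards along realizers from any point, obtaining a sequence $(x_n)$ with $T(x_{n+1})=x_n$ and $\frac{1}{N}\sum_{n=1}^N A(x_n)\to 2c_u$; any weak-$*$ limit of the empirical measures is $T$-invariant with $\int A=2c_u$, hence $m(A)\ge 2c_u$. Your parenthetical appeal to \cite{CLT}, \cite{BLL} does cover this, so the plan is sound; just be aware the actual labor lies on the opposite inequality from the one you flagged.
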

	
	{\bf Proof:} If
	\begin{equation}\label{c13} u(x)=   \frac{\max_{T(y)=x} [A(y)+  u(y)] + u(x)}{2} -c_u ,\end{equation}
	then, for all $x$, we obtain
	$ u(x)= \frac{\max_{T(y)=x} [A(y)+  u(y)] + u(x)}{2} -c, $
	where $c=c_u=  \sup_{z} \frac{\max_{T(y)=z} [A(y)+  u(y)] + u(z)}{2} $ is  constant.
	This means that
	$$2 u(x)= \max_{T(y)=x} [A(y)+  u(y)]  +  u(x) - 2c, $$
	and, finally, we get
	$ u(x)= \max_{T(y)=x} [A(y)+  u(y)]   - 2c,$ for any $x$.
	
	In the end of the proof of Theorem 11 in \cite{BCLMS} it is shown that this implies that $m(A) = 2 c$ and it follows that $u$ is a calibrated subaction.
	
	\qed

\medskip

 {\bf Counter example 1:} 	$\mathcal{G}$ may not be a strong contraction (by a factor smaller than $1$).
 We will present an example where  $f_0,g_0\in \mathcal{C}$ but $ |\mathcal{G}(f_0)- \mathcal{G}(g_0)|=1/2=  |f_0-g_0|.$

 Consider the potential $A$ with the graph given by Figure  \ref{fig:h1}. This potential is linear by parts and has the value $0$  on the points $1/8, 1/4, 3/4, 7/8$. The value $-1$ is attained at the points $0, 3/16,1/2 ,13/16,1$.

 Denote $g_0=0$ and $f_0=A$. Then, $|f_0-g_0|=|f_0-g_0+1/2|_0=1/2$.
 We denote $f_1= \mathcal{G}(f_0) $ and  $g_1= \mathcal{G}(g_0) $.
 The graph of the function $x \to | f_1(x)- g_1(x)+0.5|$ is described by the bottom rigth picture on Figure \ref{fig:h1}.
 One can show that   $|f_1-g_1|=| f_1- g_1+1/2|_0=1/2$. Therefore,  for such potential $A$ the transformation
 $\mathcal{G}$ is not a strong contraction. Theorem \ref{ty1} shows that
 $\mathcal{G}$ is a weak contraction.

$\hfill \diamondsuit$
\medskip

\begin{figure}[h!]
  \centering
  \includegraphics[scale=0.35]{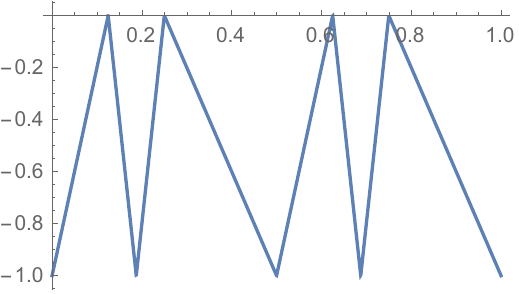}\qquad
  \includegraphics[scale=0.35]{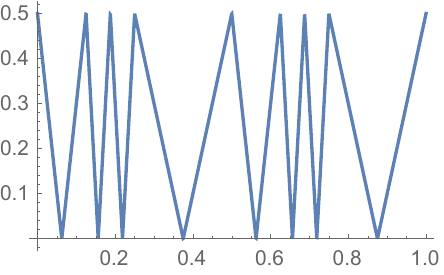}\qquad
  \includegraphics[scale=0.35]{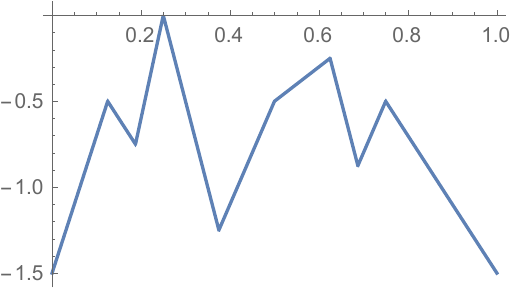}\qquad
  \includegraphics[scale=0.30]{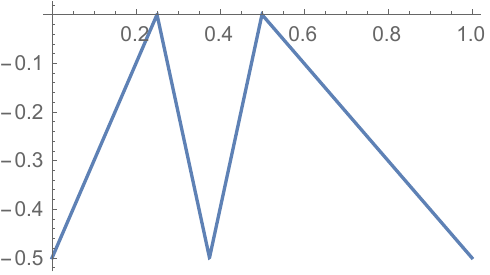}\qquad
   \includegraphics[scale=0.30]{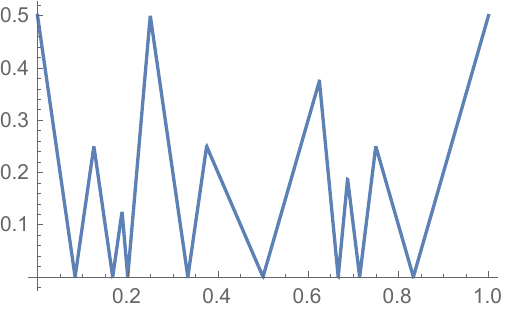}
  \caption{On the top: from  left to right  the graph of $A=f_0$, the graph of $x \to |(f_0(x)-0) +0.5|$,  the graph of   $f_1=\mathcal{G}(f_0)$. On the bottom: from left to right   the graph of $g_1=\mathcal{G}(0) =\mathcal{G}(g_0) $  and the graph of $x \to |f_1(x)-g_1( x)+0.5|.$ Therefore, $\mathcal{G}$ is not a strong contraction because $|f_0-g_0|= 1/2 = |f_1-g_1|=  | \mathcal{G}(f_0) - \mathcal{G}(g_0)|$. }\label{fig:h1}
\end{figure}

	For a fixed $K>0$ we denote by $\mathcal{C}_K$, the set of  Lipschitz functions $f:S^1 \to \mathbb{R}$ in $\mathcal{C}$,  with Lipschitz constant smaller or equal to $K$. By Arzela-Ascoli Theorem $\mathcal{C}_K$ is a compact space in $\mathcal{C}$.

	 \begin{theorem} Suppose $A$ has Lipschitz constant equal to $K$. Then, $\mathcal{G}(\mathcal{C}_K) \subset \mathcal{C}_K$. Therefore, the image of $\mathcal{C}_K$ by $\mathcal{G}$ is compact for the quotient norm in $\mathcal{C}$.

	 \end{theorem}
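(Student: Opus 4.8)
The plan is to bound the Lipschitz constant of $g:=\mathcal{G}_A(f)$ directly from its defining formula, exploiting that the two inverse branches of $T$ contract distances exactly by the factor $1/2$; here Lipschitz constants on $S^1=\mathbb{R}/\mathbb{Z}$ are measured with the flat distance $d$.

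First I would isolate $h(x):=\max_{T(y)=x}[A(y)+f(y)]$ and check that it is $K$-Lipschitz for every $f\in\mathcal{C}_K$. Fix $x,x'\in S^1$ and write $x'=x+\delta$ in $\mathbb{R}/\mathbb{Z}$ with $|\delta|=d(x,x')\le 1/2$. Pick a preimage $y$ of $x$ realizing the maximum, so $T(y)=x$ and $h(x)=A(y)+f(y)$, and set $y':=y+\tfrac{\delta}{2}\pmod 1$; then $T(y')=2y+\delta=x'$ and $d(y,y')=\tfrac12 d(x,x')$. Since $A$ has Lipschitz constant $K$ and $f\in\mathcal{C}_K$ has Lipschitz constant at most $K$, the sum $A+f$ is $2K$-Lipschitz, hence
$$h(x')\geq A(y')+f(y')\geq A(y)+f(y)-2K\,d(y,y')=h(x)-K\,d(x,x').$$
Swapping the roles of $x$ and $x'$ gives $|h(x)-h(x')|\le K\,d(x,x')$. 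This is the only place where the wraparound of the circle matters: although $\tau_1,\tau_2$ jump as maps $S^1\to S^1$, each preimage of a point still varies continuously and at half speed, which is all the estimate uses.

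Now $g=\mathcal{G}_A(f)=\tfrac12\big(h+f\big)-c_f$, with $c_f$ the constant in (\ref{c1}). A convex combination of two functions with Lipschitz constant at most $K$ again has Lipschitz constant at most $K$, and subtracting the constant $c_f$ changes nothing, so $g$ has Lipschitz constant at most $K$. Since $\sup_x\mathcal{G}_A(f)(x)=0$ by construction, $g$ is already the normalized representative of its class, and that class lies in $\mathcal{C}_K$. This proves $\mathcal{G}(\mathcal{C}_K)\subset\mathcal{C}_K$. For the last assertion, recall that $\mathcal{C}_K$ is compact in $(\mathcal{C},|\cdot|)$ by Arzela--Ascoli (noted just before the statement) and that $\mathcal{G}$ is non-expansive, in particular continuous, by Theorem \ref{ty1}; a continuous image of a compact set is compact, so $\mathcal{G}(\mathcal{C}_K)$ is compact for the quotient norm.

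I do not expect a real obstacle here: the one point needing care is the bookkeeping of constants. The factor $2K$ created by summing the two $K$-Lipschitz functions $A$ and $f$ is absorbed exactly by the $1/2$-contraction of the inverse branches, and the subsequent $1/2$-averaging in the definition of $\mathcal{G}$ preserves the constant $K$ rather than inflating it — this tightness is precisely what makes the scheme a \emph{$1/2$}-procedure.
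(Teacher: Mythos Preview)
Your proof is correct and follows essentially the same route as the paper: pick the realizer at one point, follow the same inverse branch to the other point, and combine the $K$-Lipschitz bounds for $A$ and $f$ with the $1/2$-contraction of the branches. The only cosmetic difference is that you first isolate $h(x)=\max_{T(y)=x}[A(y)+f(y)]$ and prove it is $K$-Lipschitz before averaging with $f$, whereas the paper carries the whole expression for $\mathcal{G}(f_0)(x)-\mathcal{G}(f_0)(y)$ in a single chain of inequalities; your packaging is arguably cleaner but the arithmetic is identical. For the compactness clause you are actually more careful than the paper: the paper's proof stops at the inclusion $\mathcal{G}(\mathcal{C}_K)\subset\mathcal{C}_K$, while you correctly note that compactness of the image needs continuity of $\mathcal{G}$ (Theorem~\ref{ty1}) together with compactness of $\mathcal{C}_K$.
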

	 \medskip
	
	 {\bf Proof:} Denote $f_1= \mathcal{G}(f_0).$

	 Given a point $y$ assume without loss of generality that $f_1(x)-f_1(y)\geq 0.$
	
	 Then,
	 $$  f_1(x)-f_1(y)	\leq \left[\,\frac{ A(\tau_{a_0^{x,f_0}}(x))}{2} + \frac{1}{2}(\,f_0(\tau_{a_0^{x,f_0}}(x)) + f_0(x)\, )\,\right] -$$
	 $$\left[\,\frac{ A(\tau_{a_0^{x,f_0}}(y))}{2} + \frac{1}{2}(\,f_0(\tau_{a_0^{x,f_0}}(y)) + f_0(y)\, )\,\right] =$$
	 $$\frac{1}{2} [ A(\tau_{a_0^{x,f_0}}(x)) -  A(\tau_{a_0^{x,f_0}}(y))]+ 	 \frac{1}{2}[\,f_0(\tau_{a_0^{x,f_0}}(x))-\,f_0(\tau_{a_0^{x,f_0}}(y))]+\frac{1}{2} [f_0(x)-f_0(y)]\leq$$
	 $$ K\,\frac{1}{2}\, |\tau_{a_0^{x,f_0}}(x)) -  \tau_{a_0^{x,f_0}}(y)| + K\,\frac{1}{2}\, |\tau_{a_0^{x,f_0}}(x)) -  \tau_{a_0^{x,f_0}}(y)|+\frac{1}{2}K\, |x-y|=  $$
	 $$ K\,\frac{1}{2}\,\frac{1}{2} |x-y| +  K\,\frac{1}{2}\,\frac{1}{2} |x-y|   +\frac{1}{2}K\, |x-y|= K\,|x-y|. $$

	 \qed

\medskip

The next theorem is a direct consequence of the nonexpansiveness of $\mathcal{L}_A$ but we will present a proof for the benefit of the reader.
	
	\begin{theorem} \label{ty1}
	 	Given the functions $f,g \in \mathcal{C}$ we have
	 	$$ |\mathcal{G}(f)- \mathcal{G}(g)|\leq |f-g|.$$

	 \end{theorem}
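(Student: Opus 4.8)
The plan is to reduce the inequality to two completely elementary facts — the sup-norm nonexpansiveness of the max-plus operator and the averaging (triangle) inequality — while carefully tracking additive constants so that everything descends correctly to the quotient space $\mathcal{C}$. As a first step I would record the relevant properties of the operator $Bf(x):=\max_{T(y)=x}[A(y)+f(y)]$ on $C^0$. It obviously commutes with additive constants, $B(f+c)=B(f)+c$, and it is nonexpansive in the sup norm: fixing $x$ and choosing a preimage $y^\ast$ of $x$ that realizes the maximum defining $Bf(x)$, one gets $Bf(x)-Bg(x)\le [A(y^\ast)+f(y^\ast)]-[A(y^\ast)+g(y^\ast)]=f(y^\ast)-g(y^\ast)\le |f-g|_0$, and, symmetrically, $Bg(x)-Bf(x)\le |f-g|_0$, whence $|Bf-Bg|_0\le |f-g|_0$. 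This is exactly the nonexpansiveness of $\mathcal{L}_A$ alluded to in the remark preceding the theorem, since $\mathcal{L}_A(f)$ equals $Bf$ shifted by the additive constant $-\sup_s Bf(s)$.

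Next I would observe that $\mathcal{G}_A(f)=\tfrac12\bigl(Bf+f\bigr)-c_f$, so $\mathcal{G}_A(f)$ and $\tfrac12(Bf+f)$ differ by a constant and therefore represent the same class in $\mathcal{C}$; moreover, using $B(f+c)=B(f)+c$, the map $f\mapsto \tfrac12(Bf+f)$ itself commutes with the addition of constants, hence it is precisely the lift to $C^0$ of the map $\mathcal{G}$ on $\mathcal{C}$. In other words, on $\mathcal{C}$ we have the affine identity $\mathcal{G}=\tfrac12(\mathcal{L}_A+\mathrm{id})$. Concretely, for any representatives of $f,g$ and any constant $\alpha\in\mathbb{R}$,
\[
\mathcal{G}(f)(x)-\mathcal{G}(g)(x)\;=\;\tfrac12\bigl(Bf(x)-B(g-\alpha)(x)\bigr)+\tfrac12\bigl(f(x)-(g-\alpha)(x)\bigr)+(\text{const}),
\]
since replacing $g$ by $g-\alpha$ only shifts both occurrences of $g$ by a constant.

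Taking sup norms and using the triangle inequality together with the nonexpansiveness of $B$ from Step 1,
\[
|\mathcal{G}(f)-\mathcal{G}(g)|\;\le\;\Bigl|\tfrac12\bigl(Bf-B(g-\alpha)\bigr)+\tfrac12\bigl(f-(g-\alpha)\bigr)\Bigr|_0\;\le\;\tfrac12\bigl|Bf-B(g-\alpha)\bigr|_0+\tfrac12\bigl|f-(g-\alpha)\bigr|_0\;\le\;|f-g+\alpha|_0.
\]
Taking the infimum over $\alpha\in\mathbb{R}$ on the right-hand side gives $|\mathcal{G}(f)-\mathcal{G}(g)|\le\inf_{\alpha}|f-g+\alpha|_0=|f-g|$, which is the claim. (Equivalently and more conceptually: on $\mathcal{C}$, $\mathcal{G}=\tfrac12(\mathcal{L}_A+\mathrm{id})$ is a convex combination of the nonexpansive map $\mathcal{L}_A$ and the isometry $\mathrm{id}$, hence nonexpansive by the norm triangle inequality.)

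The only genuine subtlety — the "hard part," such as it is — is the bookkeeping of additive constants: one must check that the normalizing constant $c_f$ in the definition of $\mathcal{G}$ disappears in the quotient, and that $B$ (and hence the lift of $\mathcal{G}$) commutes with adding constants, which is exactly what allows the infimum over $\alpha$ defining the quotient norm to be pulled "outside" the application of $\mathcal{G}$. Once this is set up, the analytic content is trivial.
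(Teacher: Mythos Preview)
Your proof is correct and follows essentially the same route as the paper: both arguments reduce to the sup-norm nonexpansiveness of the max-plus operator $B$ (which you prove via a realizer, the paper via monotonicity of the max---the same thing), track the additive constants so that one can pass to the quotient norm, and then average with the identity. Your presentation is slightly more conceptual in explicitly identifying $\mathcal{G}=\tfrac12(\mathcal{L}_A+\mathrm{id})$ on $\mathcal{C}$, but the key inequality you obtain, $|\mathcal{G}(f)-\mathcal{G}(g)+\text{const}|_0\le|f-g+\alpha|_0$, is exactly the paper's inequality~(\ref{uru}) with $\alpha=d$ and $\text{const}=c_f-c_g+d$.
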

	{\bf Proof:}  Let $[f], [g] \in \mathcal{C}$ and $d = \alpha_{f-g }\in \mathbb{R}$ such that
\[
|[f]- [g]|= |f -g +d |_{0}.
\]

We denote $k=\alpha_{\mathcal{G}(f)- \mathcal{G}(g)  } $ the value such that
$|\mathcal{G}([f])- \mathcal{G}([g])|=|\mathcal{G}([f])- \mathcal{G}([g]) +k|_0.$

In order to estimate $|\mathcal{G}([f])- \mathcal{G}([g])|$ consider
$\mathcal{G}(f)(x)- \mathcal{G}(g)(x)=
$
$$
-c_{f}+\frac{1}{2} f(x) + \frac{1}{2}\max_{i \in \{1,2\}}\left[ (A+f)(\tau_{i}(x)) \right] +c_{g}-\frac{1}{2} g(x) - \frac{1}{2}\max_{i \in \{1,2\}}\left[ (A+g)(\tau_{i}(x)) \right],
$$
which means
$
2\left(\mathcal{G}(f)(x)- \mathcal{G}(g)(x) +c_{f}-c_{g}\right)=
$
\[
 f(x)-g(x) + \max_{i \in \{1,2\}}\left[ (A+f)(\tau_{i}(x)) \right]  - \max_{i \in \{1,2\}}\left[ (A+g)(\tau_{i}(x)) \right].
\]

We add  $d$ to both sides obtaining
$$
2\left(\mathcal{G}(f)(x)- \mathcal{G}(g)(x) +c_{f}-c_{g}+ d\right)=$$
$$
 f(x)-g(x)+d + \max_{i \in \{1,2\}}\left[ (A+f+d)(\tau_{i}(x)) \right]  -\max_{i \in \{1,2\}}\left[ (A+g)(\tau_{i}(x)) \right],
$$
which can be rewritten as
$
2\left(\mathcal{G}(f)(x)- \mathcal{G}(g)(x) +c_{f}-c_{g}+ d\right)=$
$$
 \left(f(x)-g(x)+d\right) + \max_{i \in \{1,2\}}\left[ (A+g+f-g+d)(\tau_{i}(x)) \right]  - \max_{i \in \{1,2\}}\left[ (A+g)(\tau_{i}(x)) \right].
$$
We notice that
$ -|[f]- [g]| \leq f(y)-g(y)+d \leq |[f]- [g]|$
for any $y\in X$. By monotonicity of the supremum we get
$$
-|[f]- [g]| + \max_{i \in \{1,2\}}\left[ (A+g)(\tau_{i}(x))\right]  \leq$$
$$\max_{i \in \{1,2\}}\left[ (A+g+f-g+d)(\tau_{i}(x)) \right] \leq | [f]- [g]| + \max_{i \in \{1,2\}}\left[ (A+g)(\tau_{i}(x))\right],
$$
which  is equivalent to
$
-|[f]- [g]| \leq$
$$ \max_{i \in \{1,2\}}\left[ (A+g+f-g+d)(\tau_{i}(x)) \right] -\max_{i \in \{1,2\}}\left[ (A+g)(\tau_{i}(x))\right] \leq | [f]- [g]|,
$$
thus
\[
|\max_{i \in \{1,2\}}\left[ (A+g+f-g+d)(\tau_{i}(x)) \right]  - \max_{i \in \{1,2\}}\left[ (A+g)(\tau_{i}(x)) \right] |_{0} \leq | [f]- [g]|.
\]

We assumed  that
$|f-g+d |_{0} = | [f]- [g]|.
$
Therefore, using the
 two last inequalities we get
$
|2\left(\mathcal{G}(f)- \mathcal{G}(g)+c_{f}-c_{g}+ d\right)|_{0} \leq | [f]- [g]|+ | [f]- [g]|,
$
which is equivalent to
\begin{equation} \label{uru}
|\mathcal{G}(f)- \mathcal{G}(g) +(c_{f}-c_{g}+ d)|_{0} \leq | [f]- [g]|.
\end{equation}
We recall that
$
|\mathcal{G}([f])- \mathcal{G}([g]) |=$
$$\min_{k\in \mathbb{R}} |\mathcal{G}(f)- \mathcal{G}(g) + k |_{0} \leq |\mathcal{G}(f)- \mathcal{G}(g) +(c_{f}-c_{g}+ d)|_{0} \leq | [f]- [g]|,
$$
and this finish the proof.  \qed

	\medskip

	\section{Generic properties} \label{gen}
	
	We will show a generic property for the iterative process acting on continuous functions for a given fixed Lipschitz potential $A$.
	
	 \begin{definition} \label{dee}  Consider the set $\mathfrak{A}\subset \mathcal{C} \times \mathcal{C}$
	 	of pairs of functions $(f_0,g_0)$,
	 	such that,
	 	if  $ |f_0-g_0|=|(f_0- g_0 )+ \alpha_{f_0-g_0)}|_0=  (f_0-g_0) (r)+ \alpha_{f_0-g_0} $, for some $r$, then,
	 	$$(f_0-g_0) (r) \neq  (f_0-g_0) (\tau_1(r))\,\,\text{and}\,\, (f_0-g_0) (r)\neq (f_0-g_0) (\tau_2(r)).$$
	 	
	 \end{definition}

Note that the above condition does not depends on the potential $A$.
In the case $f_0(x)-g_0(x)+ \alpha_{(f_0-g_0)}$ attains the supremum in a unique point then $(f_0,g_0)\in\mathfrak{A}$. Obviously, we could choose $\mathfrak{A}\subset \mathcal{C}$ the set of $h \in  \mathcal{C}$  such that $h (r) \neq  h (\tau_1(r))\,\,\text{and}\,\, h (r)\neq h(\tau_2(r))$, but our choice $h=f_0-g_0$ avoid this relabeling in the future.

We will show in  Corollary  \ref{cor} that the condition $(f,g) \in  \mathfrak{A}$ is generic in  $\mathcal{C} \times \mathcal{C}$.

\medskip

	 \begin{theorem} \label{ty}
	 	Given the functions $f_0,g_0 \in \mathcal{C}$,
	 	assume $(f_0,g_0) \in\mathfrak{A} $.
	 	In this case, if
	 	$ |\mathcal{G}(f_0)- \mathcal{G}(g_0)|= |f_0-g_0|,$
	 	then, $f_0=g_0.$
	 	
\end{theorem}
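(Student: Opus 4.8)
The plan is to revisit the chain of inequalities in the proof of Theorem \ref{ty1} and track where equality is forced. Recall that in that proof, writing $d = \alpha_{f_0 - g_0}$ so that $|f_0 - g_0| = |f_0 - g_0 + d|_0$, we derived
\[
2\left(\mathcal{G}(f_0)(x) - \mathcal{G}(g_0)(x) + c_{f_0} - c_{g_0} + d\right) = \bigl(f_0(x) - g_0(x) + d\bigr) + \Bigl[\max_{i}\bigl(A + g_0 + (f_0 - g_0 + d)\bigr)(\tau_i(x)) - \max_{i}(A + g_0)(\tau_i(x))\Bigr],
\]
and that both bracketed terms on the right lie in $[-|f_0 - g_0|, |f_0 - g_0|]$. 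Hence the left side has modulus at most $2|f_0-g_0|$, giving $|\mathcal{G}(f_0) - \mathcal{G}(g_0)| \le |f_0 - g_0|$. Now suppose $|\mathcal{G}(f_0) - \mathcal{G}(g_0)| = |f_0 - g_0| =: M$. Then there must exist a point $x_0$ where $|\mathcal{G}(f_0)(x_0) - \mathcal{G}(g_0)(x_0) + c_{f_0} - c_{g_0} + d| = M$ (here I would first argue that $c_{f_0} - c_{g_0} + d$ is in fact an optimal additive constant for $\mathcal{G}(f_0) - \mathcal{G}(g_0)$, by the sandwich of norms at the end of that proof: since the outer terms equal $M$, the minimizing constant $k$ achieves the same value as $c_{f_0} - c_{g_0} + d$). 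At such an $x_0$, since the right-hand side is a sum of two terms each bounded by $M$ in absolute value, equality of the modulus of the sum to $2M$ forces both terms to equal $+M$ or both to equal $-M$.

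First I would handle the case where both terms equal $+M$ (the $-M$ case is symmetric, or can be absorbed by replacing $h = f_0 - g_0 + d$ with $-h$ and swapping roles). From the first term, $h(x_0) = f_0(x_0) - g_0(x_0) + d = M$, so $x_0$ is a point where $h$ attains its maximum $M$; writing $r = x_0$, this is exactly the situation in Definition \ref{dee}. From the second term,
\[
\max_{i}\bigl(A + g_0 + h\bigr)(\tau_i(r)) - \max_{i}(A + g_0)(\tau_i(r)) = M.
\]
Let $j$ be the branch realizing the first maximum, so the left side equals $(A + g_0 + h)(\tau_j(r)) - \max_i (A + g_0)(\tau_i(r)) \le (A + g_0 + h)(\tau_j(r)) - (A + g_0)(\tau_j(r)) = h(\tau_j(r)) \le M$. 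For the whole expression to equal $M$ we need $h(\tau_j(r)) = M$, i.e. $(f_0 - g_0)(\tau_j(r)) + d = M = (f_0 - g_0)(r) + d$, hence $(f_0 - g_0)(\tau_j(r)) = (f_0 - g_0)(r)$. But $\tau_j(r) \in \{\tau_1(r), \tau_2(r)\}$, so this directly contradicts the defining property of $\mathfrak{A}$, which says $(f_0 - g_0)(r) \ne (f_0 - g_0)(\tau_1(r))$ and $(f_0 - g_0)(r) \ne (f_0 - g_0)(\tau_2(r))$.

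The contradiction shows that no such $x_0$ can exist unless $M = 0$, i.e. unless $f_0 - g_0$ is constant, which in $\mathcal{C}$ means $f_0 = g_0$. The step I expect to require the most care is the bookkeeping at the start: confirming that equality of quotient norms propagates to a genuine pointwise equality $|\mathcal{G}(f_0)(x_0) - \mathcal{G}(g_0)(x_0) + c_{f_0} - c_{g_0} + d| = M$ at some point $x_0$ — that is, that the specific additive constant $c_{f_0} - c_{g_0} + d$ carried through the proof of Theorem \ref{ty1} is actually norm-optimal for the difference $\mathcal{G}(f_0) - \mathcal{G}(g_0)$ under the equality hypothesis. Once that is pinned down, everything else is the elementary observation that a sum of two quantities each of absolute value $\le M$ has absolute value $2M$ only when both are $\pm M$ with the same sign, combined with the single-branch estimate above and the definition of $\mathfrak{A}$. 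One should also remember to treat the case where the maximum of $h$ is attained at several points: the argument only needs one maximizing point $r$, and $\mathfrak{A}$ gives the branch-separation at every such $r$, so there is no loss.
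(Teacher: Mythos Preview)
Your proposal is correct and follows essentially the same route as the paper's proof. Both arguments use the sandwich $|\mathcal{G}(f_0)-\mathcal{G}(g_0)|\le |\mathcal{G}(f_0)-\mathcal{G}(g_0)+(c_{f_0}-c_{g_0}+d)|_0\le |f_0-g_0|$ from the proof of Theorem~\ref{ty1} to conclude that $c_{f_0}-c_{g_0}+d$ is a norm-realizing constant, then pick a point (your $x_0$, the paper's $z_1$) where the sup is attained, and force the two summands in the identity for $2(\mathcal{G}(f_0)-\mathcal{G}(g_0)+c_{f_0}-c_{g_0}+d)$ to be simultaneously extremal; this yields a maximizer $r$ of $h=f_0-g_0+d$ together with $h(\tau_j(r))=h(r)$, contradicting the defining property of $\mathfrak{A}$.

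One small correction: your remark that the ``$-M$'' case can be handled ``by swapping roles'' is not quite right, because the set $\mathfrak{A}$ in Definition~\ref{dee} is \emph{asymmetric} --- it only constrains points $r$ where $(f_0-g_0)(r)+d$ equals its maximum, not its minimum, so $(f_0,g_0)\in\mathfrak{A}$ does not imply $(g_0,f_0)\in\mathfrak{A}$. The paper avoids this issue (and you can too) by simply observing that, since $c_{f_0}-c_{g_0}+d$ is the optimal constant and $M>0$, the function $\mathcal{G}(f_0)-\mathcal{G}(g_0)+(c_{f_0}-c_{g_0}+d)$ attains both values $+M$ and $-M$; hence one may always choose $x_0$ so that the value there is $+M$, and only the $+M$ case needs to be argued. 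With that adjustment your proof is complete and matches the paper's.
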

{\bf Proof:}
We denote by  $d=\alpha_{f_0-g_0}$  the value such that
 $|  (f_0-g_0) +d |_0 =  |f_0-g_0|$.

We denote by  $z_0$ the point such that $ |f_0 - g_0|=| f_0(z_0) - g_0(z_0) +d|$.
Without loss of generality we assume that $  f_0(z_0) - g_0(z_0) +d>0. $

	Note that  $|(f_0-g_0+d)(z_0)|$ also maximizes
	\begin{equation} \label{pk} x \to  |(f_0-g_0+d)(x)|.
	\end{equation}
	
	  Note that $d$ was determined by the choice $(f_0-g_0)$ (and, not $(g_0-f_0)$).
	
We denote by $k=\alpha_{ \mathcal{G}(f_0)- \mathcal{G}(g_0)}$ the value   $|\mathcal{G}(f_0)- \mathcal{G}(g_0)| +k   |_0=|\mathcal{G}(f_0)- \mathcal{G}(g_0)|.$

 Assuming 	$|\mathcal{G}(f_0)- \mathcal{G}(g_0)|= |f_0-g_0|,$ then,
from (\ref{uru}) we get
 \begin{equation} \label{uru1}
|\mathcal{G}(f_0)- \mathcal{G}(g_0)| =
|\mathcal{G}(f_0)- \mathcal{G}(g_0) +k|_0\leq
|\mathcal{G}(f_0)- \mathcal{G}(g_0) +(c_{f_0}-c_{g_0}+ d)|_0 \leq | [f_0]- [g_0]|.
\end{equation}

 Therefore, $k$ can be taken as $ k= c_{f_0}-c_{g_0}+d.$ Note that $k$ was determined by $d$ and the choice $(f_0-g_0)$ (and, not $(g_0-f_0)$).

We denote by $z_1$ a point such that $ |\mathcal{G}(f_0)- \mathcal{G}(g_0)|=
 |\mathcal{G}(f_0)(z_1)- \mathcal{G}(g_0)(z_1) +k|=| f_1(z_1) - g_1(z_1) +k|$.

In the case $ (f_1-g_1)(z_1)+k\leq 0$  we know that there exists another
point $\tilde{z_1}$, such that,  $ 0 \leq (f_1-g_1)(\tilde{z_1})+k = |\mathcal{G}(f_0)- \mathcal{G}(g_0) +k|_0.$

Therefore, without loss of generality, we can always assume that it is true  $ (f_1-g_1)(z_1)+k\geq 0.$

 Assume that
$(f_0,g_0) \in\mathfrak{A} $.

Under the above conditions in $f_0,g_0$, there exists $z_0$, $z_1$, $\bar{z}=\tau_{a_0^{z_1,f_0}}(z_1)$ and $\bar{w}=\tau_{a_0^{z_1,g_0}}(z_1)$ such that
	  $$   (f_0-g_0)(z_0)  +d =  |f_0-g_0|=|\mathcal{G}(f_0)- \mathcal{G}(g_0)|= (f_1-g_1)(z_1)+k=$$
	  $$[\frac{ A(\bar{z})}{2} + \frac{1}{2}(f_0(\bar{z}) + f_0(z_1))]  -[\frac{ A(\bar{w})}{2} + \frac{1}{2}(g_0(\bar{w}) + g_0(z_1) )] +k-  c_f+c_g   \leq$$
	 $$[\frac{ A(\bar{z})}{2} + \frac{1}{2}(f_0(\bar{z}) + f_0(z_1))]  -[\frac{ A(\bar{z})}{2} + \frac{1}{2}(g_0(\bar{z}) + g_0(z_1) )]  +k-  c_f+c_g  =$$
	$$[ \frac{1}{2}(f_0(\bar{z}) + f_0(z_1))]  -[ \frac{1}{2}(g_0(\bar{z}) + g_0(z_1) )]+k-  c_f+c_g    =$$
		$$\frac{1}{2}(f_0(z_1) - g_0(z_1)) +  \frac{1}{2}(f_0(\bar{z}) - g_0(\bar{z} )+k-  c_f+c_g    =$$
	\begin{equation}
	\label{aqui} \frac{1}{2}(f_0(z_1) - g_0(z_1)) +  \frac{1}{2}(f_0(\bar{z}) - g_0(\bar{z} )+d.
	\end{equation}
	
As $(f_0-g_0+d)(z_0)>0$ is a supremum, it follows from the above that
	  $$  (f_0-g_0)(z_0)+ d\leq
	 \frac{1}{2}[ (f_0 - g_0)(z_1) +d ]   + \frac{1}{2}[ (f_0 - g_0 )(\bar{z})+d]     \leq$$
	\begin{equation} \label{rrr}
	 \frac{1}{2}[ (f_0 - g_0)(z_0) +d] +      \frac{1}{2} [(f_0 - g_0 )(z_0)+d]= (f_0 - g_0)(z_0) +d.
	\end{equation}

	  $ (f_0 - g_0)(z_1) +d$ and $  (f_0 - g_0 )(\bar{z})+d$ can not be both negative (because  $   (f_0-g_0)(z_0)  +d >0$).
	
	 Both  $ (f_0 - g_0)(z_1) +d$ and $  (f_0 - g_0 )(\bar{z})+d$ are positive. Otherwise,  from (\ref{rrr}) we get
	 $(f_0-g_0)(z_0)+ d< \,\frac{1}{2} [\, (f_0-g_0)(z_0)+ d\,].$
This implies that
	 $
	 \frac{1}{2}[ (f_0 - g_0)(z_1)+d]   + \frac{1}{2} [  (f_0 - g_0 )(\bar{z})+d]     = (f_0-g_0)(z_0) + d. $
	
	Remember that  $d=\alpha_{ f_0-g_0 }=  - \frac{\max (f_0-g_0) + \min(f_0-g_0)}{2}.$
From \ref{rrr} we get
 $   (f_0 - g_0)(z_1)+ d =(f_0-g_0)   (\bar{z})+ d   =(f_0 - g_0)(z_0) +d .$

As $(f_0,g_0) \in\mathfrak{A} $ we get by Corollary \ref{cor} a contradiction.

\medskip

\qed
	
	 \medskip
	
	{\bf Remark 3:}  Given the  point $z_1$ above  (supremum of
	 $x \to (f_1(x)-g_1(x))+k$) we get from (\ref{aqui})   that
	\begin{equation} \label{aqui1}(f_1-g_1)(z_1)+k\leq \frac{1}{2}(f_0(z_1) - g_0(z_1)+ d) +  \frac{1}{2}(f_0(\tau_{a_0^{z_1,f_0}}(z_1)) - g_0(\tau_{a_0^{z_1,f_0}}(z_1) +d) . \end{equation}
	Note that if
$ f_0(z_1) - g_0(z_1)+ d$
and
$ f_0(\tau_{a_0^{z_1,f_0}}(z_1)) - g_0(\tau_{a_0^{z_1,f_0}}(z_1) +d)$	have opposite signals, then we get a better rate
	
\begin{equation} \label{equimp} |\mathcal{G}(f_0)- \mathcal{G}(g_0)|\,=\,(f_1-g_1)(z_1)+k \leq \frac{1}{2} |f_0-g_0| .
\end{equation}

	 During the iteration procedure this will happen from time to time for
	 $f_n = \mathcal{G}^n (f_0) $ and  $g_n = \mathcal{G}^n (u)=u $. This is  a good explanation for the outstanding performance of the algorithm.

$\hfill \diamondsuit$
	
	 \medskip

	 \begin{definition} \label{deedee} Given a  Lipschitz potential $A$ with a unique subaction $u\in \mathcal{C}$ consider the set $\mathfrak{B}\subset \mathcal{C}$
	 	 of functions $f_0$,
	 	such that,
	 	if  $ |f_0-u|=|(f_0- u )+ \alpha_{f_0-u)}|_0=  (f_0-u) (r)+ \alpha_{f_0-u} $, for some $r$, then,
	 	$$(f_0-u) (r) \neq  (f_0-u) (\tau_1(r))\,\,\text{and}\,\, (f_0-u) (r)\neq (f_0-u) (\tau_2(r)).$$
	 	
	 \end{definition}

The set $\mathfrak{B}$ is dense in $\mathcal{C}$. The proof of this fact  is basically the same as
the proof that $\mathfrak{A}$ is  dense on $\mathcal{C}\times \mathcal{C}$ and will be not presented.
	
\medskip

In the same way as before one can show that:

	 \begin{theorem} \label{tyy}
	 	Given the function $f_0\in \mathcal{C}$,
	 	assume $f_0 \in\mathfrak{B} $.
	 	In this case, if
	 	$ |\mathcal{G}(f_0)- u|= |f_0-u|,$
	 	then, $f_0=u.$ This implies that if $f_0\neq u$, then
	 \begin{equation} \label{iimp} |\mathcal{G}(f_0)- u|< |f_0-u|.
	 \end{equation}
	 	
\medskip

Therefore,  if  $\mathcal{G}^n(f_0)\in\mathfrak{B} $  and $\mathcal{G}^n(f_0)\neq u$, then
\begin{equation} \label{iimpil}  |\mathcal{G}^{n+1}(f_0)- u|< |\mathcal{G}^n(f_0)-u|.
	 	 \end{equation}

\end{theorem}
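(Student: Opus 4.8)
The plan is to obtain Theorem~\ref{tyy} as the specialization of Theorem~\ref{ty} to the case $g_0=u$, combined with the non-expansiveness of $\mathcal{G}$ (Theorem~\ref{ty1}). First I would record the fact that the calibrated subaction $u$ is a fixed point of $\mathcal{G}$ in $\mathcal{C}$, i.e. $\mathcal{G}(u)=u$. This is the converse of the Proposition proved above and is checked by the same short computation: for a calibrated $u$ one has $\max_{T(y)=x}[A(y)+u(y)]=u(x)+m(A)$, so $\mathcal{G}_A(u)(x)=\frac{1}{2}\big((u(x)+m(A))+u(x)\big)-c_u=u(x)+\frac{m(A)}{2}-c_u$, and since the normalizing constant $c_u$ equals $\frac{m(A)}{2}$ for the canonical representative (and $\mathcal{G}$ is anyway insensitive to additive constants), we conclude $\mathcal{G}(u)=u$ in $\mathcal{C}$.

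Next I would observe that the hypothesis $f_0\in\mathfrak{B}$ of Definition~\ref{deedee} is, word for word, the assertion that the pair $(f_0,u)$ belongs to the set $\mathfrak{A}$ of Definition~\ref{dee}, because the membership condition defining $\mathfrak{A}$ involves only the difference $f_0-g_0$, which here equals $f_0-u$. Consequently Theorem~\ref{ty} applies verbatim with $g_0:=u$: using $\mathcal{G}(u)=u$, the hypothesis $|\mathcal{G}(f_0)-u|=|\mathcal{G}(f_0)-\mathcal{G}(u)|=|f_0-u|$ forces $f_0=u$, which is the first assertion of the theorem.

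Then inequality (\ref{iimp}) follows at once: by Theorem~\ref{ty1} one always has $|\mathcal{G}(f_0)-u|=|\mathcal{G}(f_0)-\mathcal{G}(u)|\leq|f_0-u|$, and if $f_0\neq u$ the previous step rules out equality, so the inequality is strict. Finally (\ref{iimpil}) is just (\ref{iimp}) applied with $\mathcal{G}^n(f_0)$ in the role of $f_0$: under the stated hypotheses $\mathcal{G}^n(f_0)\in\mathfrak{B}$ and $\mathcal{G}^n(f_0)\neq u$, hence $|\mathcal{G}^{n+1}(f_0)-u|=|\mathcal{G}(\mathcal{G}^n(f_0))-u|<|\mathcal{G}^n(f_0)-u|$.

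Since all the substantive work — the realizer argument exploiting the genericity condition, via Corollary~\ref{cor} — is contained in Theorem~\ref{ty}, there is no genuine obstacle here; the only points demanding care are the two bookkeeping checks above (that $\mathcal{G}(u)=u$ in $\mathcal{C}$, and that $f_0\in\mathfrak{B}$ is the same as $(f_0,u)\in\mathfrak{A}$), together with the remark that the proof of Theorem~\ref{ty} uses no property of $g_0$ beyond $(f_0,g_0)\in\mathfrak{A}$, which legitimizes the specialization. I would also stress — consistently with Section~\ref{close} — that (\ref{iimpil}) only yields strict monotone decrease at each step and not a convergence rate, so it is compatible with the non-contraction examples discussed later.
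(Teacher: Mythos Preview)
Your proposal is correct and matches the paper's own approach: the paper simply writes ``In the same way as before one can show that'' before stating Theorem~\ref{tyy}, i.e.\ it invokes the argument of Theorem~\ref{ty} specialized to $g_0=u$, which is exactly what you do. Your explicit bookkeeping --- that $\mathcal{G}(u)=u$ in $\mathcal{C}$, that $f_0\in\mathfrak{B}$ is precisely $(f_0,u)\in\mathfrak{A}$, and the appeal to Theorem~\ref{ty1} for the strict inequality --- just spells out what the paper leaves implicit.
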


Given an initial $f_0$ from time to time    $\mathcal{G}^n(f_0)\in\mathfrak{B} $ for some $n$, and
then the next iterate will experience a better approximation to the calibrated subaction $u$.

\medskip

Now we will prove that	$\mathfrak{A}$ is dense.
We will need first to state some  preliminary properties which will be used later. We recall that the norm in $\mathcal{C}$ is given by
$\displaystyle |f|=\inf_{d \in R} |f+d|_{0}$
and the distance in $\mathcal{C} \times \mathcal{C}$ is the max distance
$d((f,g),(f',g')):=\max ( |f-f'|, |g-g'|)$
which is equivalent to the product topology. We will show now that
the set $\mathfrak{A}$ is dense in
$\mathcal{C} \times \mathcal{C}$ with respect to this topology.

Consider $X=[0,1]$ and the maps $\tau_2(x)=\frac{1}{2} x$ and $\tau_2(x) = \frac{1}{2} (x+1)$. Let $\mathcal{F}=\left\{(f,g) |\, \,f, g \,\,\in \mathcal{C}\right\}\subset \mathcal{C} \times \mathcal{C}$.
Denote by $\beta$ the map  $\beta: X\times \mathcal{F} \to \mathbb{R}$ given by
$$\beta(x,f,g)= |f-g|-|f(x)-g(x)| + \min_{i \in \{0,1\}}\left\{|f-g|-|f(\tau_{i}(x))-g(\tau_{i}(x))| \right\}.$$

We notice that $\beta(x,f,g) \geq 0$, and, moreover
\begin{itemize}
	\item $\beta(x,f,g) = 0$, if and only if, $|f-g|=|f(x)-g(x)|$, and, \\ $|f-g|=|f(\tau_1(x))-g(\tau_1(x))|$ or $|f-g|=|f(\tau_2(x))-g(\tau_2(x))|$;
	\item  $\beta(x,f,g) > 0$, if and only if, one of the two conditions is true
	
	$|f-g|>|f(x)-g(x)|$, or,
	\\ $|f-g|>|f(\tau_1(x))-g(\tau_1(x))|$ and $|f-g|>|f(\tau_2(x))-g(\tau_2(x))|$.
\end{itemize}

We define the set $\mathcal{O} \subset \mathcal{F}$ as being
$$\mathcal{O}_{\mathcal{F},\delta}=\left\{(f,g)\in \mathcal{F} | \beta(x,f,g)>0,\; \forall x \in [\delta, 1-\delta]\right\}.$$

If  $d= - \frac{\max (f-g) + \min(f-g)}{2},$
then
$|f-g|= |f- g +d|_0= \frac{\max (f-g) - \min(f-g)}{2}  .$

From the previous observation we conclude that for all $(f,g) \in  \mathcal{O}_{\mathcal{F},\delta}$, if, $x$ is such that
$|f-g+d|=|f(x)-g(x)+d|$, then,
$|f-g+d|\neq |f(\tau_1(x))-g(\tau_1(x))+d|$ and $|f-g+d|\neq |f(\tau_2(x))-g(\tau_2(x))+d|$.

To motivate our proof we are going to consider an explicit example where we made a perturbation of a pair $(f,g) \in \mathcal{C}$, but $\beta(x,f,g)=0$, for some $x$.

\begin{example} \label{yyy} Consider  $(f,g) \in \mathcal{C}$ where
	\[f(x)=\left\{
	\begin{array}{ll}
	\frac{16}{3}\,x -2 & 0\leq x \  and \  x<3/8 \\
	32x^2-36x+9& 3/8\leq x \  and \  x<3/4 \\
	64x^2-104x+42& 3/4\leq x \  and \  x\leq 7/8 \\
	-16x+14& 7/8\leq x \  and \  x\leq 1.
	\end{array}
	\right.
	\]
	and $g(x)=0$.
	\begin{figure}[h!]
		\center
		\includegraphics[scale=0.5, height=2cm]{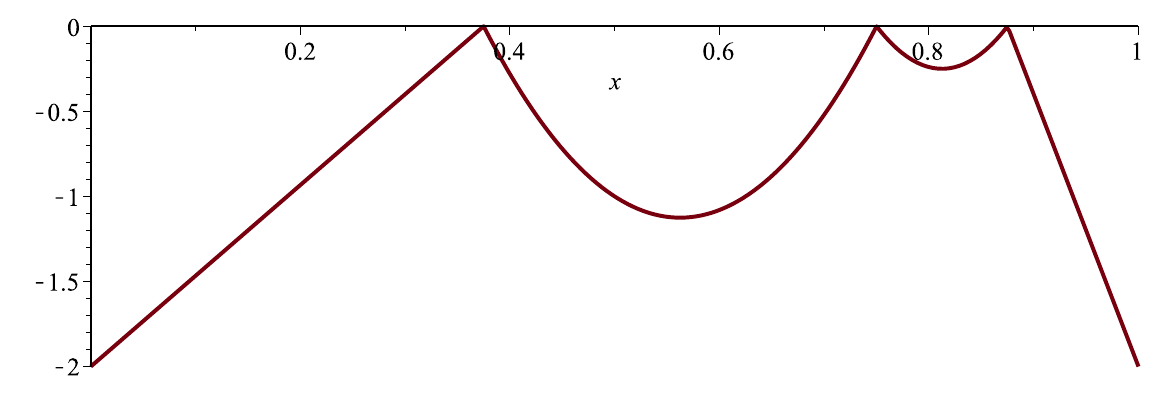}
		\caption{ $f(x)$ of example \ref{yyy} }\label{x16}
	\end{figure}

	It is easy to see that for $x=3/4$ we have
	$|f-0|=|f-0+1|=f\left(3/4\right)+1=1$, $ f\left(\tau_2\left(3/4\right)\right)+1=1$ and $f\left(\tau_1\left(3/4\right)\right)+1=1,$ (see Figure~\ref{x16})
	thus, $\beta\left(3/4, f,0\right)=0$, meaning that $(f,0) \not\in \mathcal{O}_{\mathcal{F},\frac{1}{4}}$. The same is true for $x=0$.
	
	In order to obtain the perturbation $(f_{\varepsilon}, g_{\varepsilon})$ we consider an $\varepsilon$-concentrated approximation via Dirac function $u_{\varepsilon}(x):={\frac {1}{\varepsilon\,\sqrt {\pi }}{{\rm e}^{-{\frac {{x}^{2}}{{\varepsilon}^{2}}}}}}$ (see Figure~\ref{x13})
		
	\begin{figure}[h!]
		\center
		\includegraphics[height=1.5cm, width=5cm]{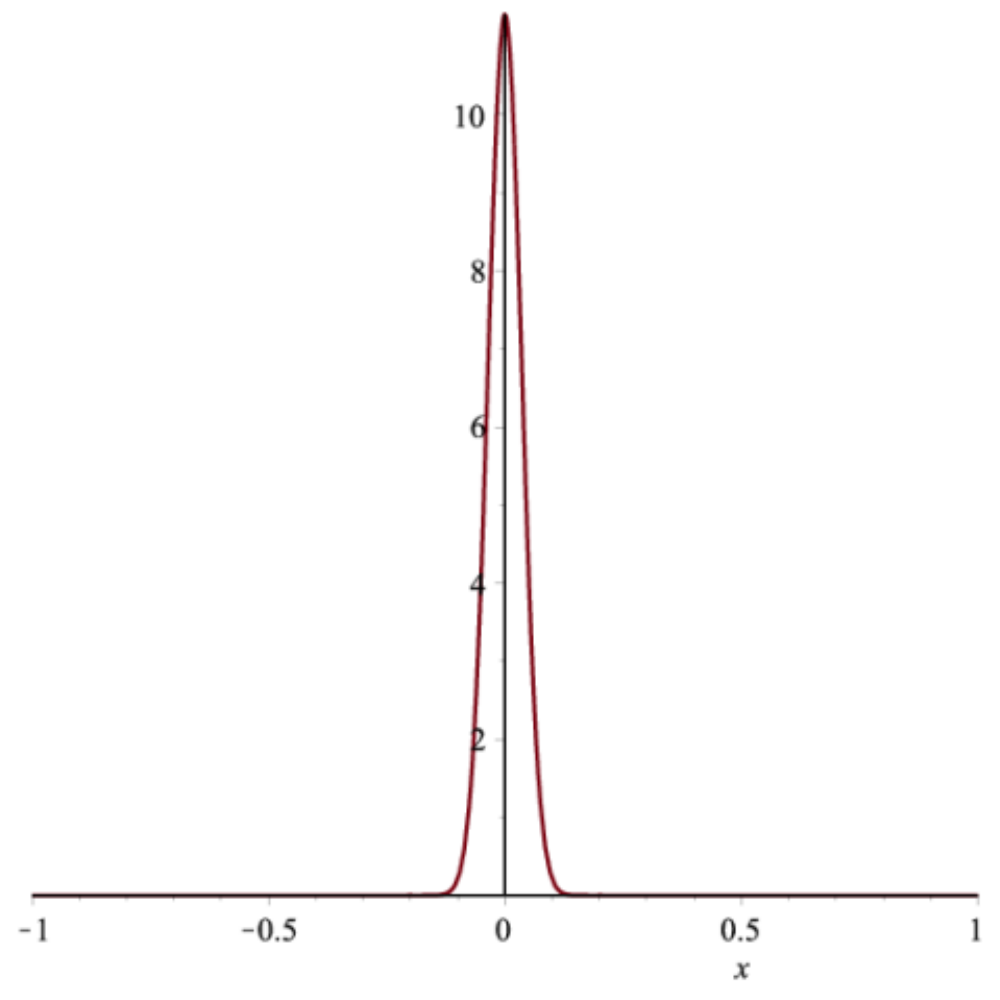}
		\caption{ $u_{\varepsilon}$}\label{x13}
	\end{figure}
	and we also we define for $\varepsilon=0.005$ the modifications (see Figure \ref{x12}):
	$$Q_{\varepsilon}(x):={\frac{1}{500}}u_{\varepsilon} \left( x-(3/4-0.015)\right) \text{ and }
	W_{\varepsilon}(x):=-{\frac{1}{1000}}u_{\varepsilon} \left( x-(0+0.015)\right):$$
	
We set
	$f_{\varepsilon}(x)=f \left( x \right)+Q_{\varepsilon}(x)+W_{\varepsilon}(x) \text{ and } g_{\varepsilon}(x)=g(x).$	
	In this case $|f_{\varepsilon}-f|=|-Q_{\varepsilon}-W_{\varepsilon}|=(0.113-(-0.226))/2=0.1695$ as we can see by the picture (see Figure \ref{x14}).

	As we can see, after the perturbation the maximum value is attained only for $x_0=\frac{3}{4}-0.015$ and for $x_1=0+0.015$ and neither of them are pre-image one of each other. Therefore, $(f_{\varepsilon}, g_{\varepsilon}) \in \mathcal{O}_{\mathcal{F},0}$  (see Figure \ref{x5}).

\end{example}

\begin{theorem}\label{gen_property}
	Let $\Lambda \subset \mathcal{F}$ a compact subset. Then the set $\mathcal{O}_{\Lambda,\delta}$ is an open and dense set. In particular,
	$\mathcal{O}_{\Lambda}:=\bigcap_{n >2} \mathcal{O}_{\Lambda,\frac{1}{n}}$
	is a dense set.
\end{theorem}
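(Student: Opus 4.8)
The plan is to prove openness and density separately, working on the compact set $\Lambda$ for a fixed $\delta>0$.

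\textit{Openness.} I would show that the complement $\Lambda \setminus \mathcal{O}_{\Lambda,\delta}$ is closed in $\Lambda$. A pair $(f,g)$ lies in this complement precisely when there exists $x\in[\delta,1-\delta]$ with $\beta(x,f,g)=0$. The key observation is that $\beta$ is continuous on $X\times\mathcal{F}$: the map $(f,g)\mapsto|f-g|$ is continuous for the quotient norm, and each term $|f(\tau_i(x))-g(\tau_i(x))|$ is jointly continuous in $(x,f,g)$ because evaluation is continuous on $C^0$ with the sup norm (and $|f|\le|f|_0$). Given a sequence $(f_n,g_n)\to(f,g)$ in $\Lambda$ with witnesses $x_n\in[\delta,1-\delta]$ such that $\beta(x_n,f_n,g_n)=0$, compactness of $[\delta,1-\delta]$ lets me extract $x_n\to x_*\in[\delta,1-\delta]$, and continuity of $\beta$ forces $\beta(x_*,f,g)=0$, so $(f,g)\notin\mathcal{O}_{\Lambda,\delta}$. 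Hence $\mathcal{O}_{\Lambda,\delta}$ is open in $\Lambda$.

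\textit{Density.} This is the substantive part, and the mechanism is exactly the one illustrated in Example~\ref{yyy}: given $(f,g)\in\Lambda$ and $\eta>0$, I want to produce $(f_\varepsilon,g_\varepsilon)$ within distance $\eta$ with $\beta(x,f_\varepsilon,g_\varepsilon)>0$ for all $x\in[\delta,1-\delta]$. Set $h=f-g$ and let $d=\alpha_h$, so that $M:=|h+d|_0=|f-g|$ and the set $S=\{x:|h(x)+d|=M\}$ of points realizing the quotient norm is a nonempty closed set. The strategy is to add to $f$ a single small bump supported near one chosen point $r_0$ of $S$ (and normalized so the bump's magnitude exceeds, by a controlled amount, the ``gap'' $M-|h(x)+d|$ away from a neighborhood of $S$) so that in the perturbed difference $h_\varepsilon+d$ the supremum is attained at exactly one point $r_0$ (or finitely many points none of which is a $\tau_i$-preimage of another). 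Concretely: choose $r_0\in S$; if neither $\tau_1(r_0)$ nor $\tau_2(r_0)$ lies in $S$ we push down slightly at all other points of $S$; the delicate case is when some branch image, say $\tau_1(r_0)$, also realizes the norm, which is handled by nudging $r_0$ to a nearby point $r_0'$ (as in the example, where $3/4$ is moved to $3/4-0.015$) so that the three points $r_0'$, $\tau_1(r_0')$, $\tau_2(r_0')$ become distinct from the realizing set; this is possible because the contraction ratio of $\tau_i$ is $1/2<1$, so the map $x\mapsto(\tau_1(x),\tau_2(x))$ separates $x$ from its images except on the fixed points, and those can be avoided by an arbitrarily small translation. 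After the perturbation the unique (or finitely many, mutually non-preimage) maximizer(s) ensure $|h_\varepsilon+d_\varepsilon|>|h_\varepsilon(\tau_i(x))+d_\varepsilon|$ for any $x$ realizing the norm, hence $\beta(\cdot,f_\varepsilon,g_\varepsilon)>0$ throughout $[\delta,1-\delta]$; taking the bump small makes $|f_\varepsilon-f|<\eta$ and we keep $g_\varepsilon=g$.

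\textit{Conclusion.} Since each $\mathcal{O}_{\Lambda,1/n}$ is open and dense in the compact (hence Baire) metric space $\Lambda$, the Baire category theorem gives that $\mathcal{O}_\Lambda=\bigcap_{n>2}\mathcal{O}_{\Lambda,1/n}$ is dense (indeed a dense $G_\delta$) in $\Lambda$. The main obstacle I anticipate is the density argument's bookkeeping in the resonant case where a norm-realizing point is simultaneously a $\tau_i$-preimage of another such point: one must verify that an arbitrarily small translation genuinely destroys all such coincidences at once without creating new ones, which requires knowing that the ``bad'' configuration set is governed by finitely many algebraic relations $x=\tau_i(x)$, $\tau_i(x)=\tau_j(y)$ among the (a priori possibly large) realizing set — handled most cleanly by first shrinking to a perturbation with a single sharp maximizer and only then worrying about its two preimages.
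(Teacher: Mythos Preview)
Your approach is essentially the paper's: continuity of $\beta$ plus compactness for openness, bump perturbations guided by Example~\ref{yyy} for density, and Baire for the intersection. Your sequential ``closed complement'' argument for openness is a routine reformulation of the paper's direct uniform-lower-bound argument.

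Two places where the paper is a bit sharper than your sketch. First, the reason the interval $[\delta,1-\delta]$ (with $\delta=1/n$) enters is the explicit bound $|\tau_i(x_0)-x_0|\ge \tfrac{1}{2n}$ for all $x_0\in[\tfrac1n,1-\tfrac1n]$; this gives a uniform scale $\varepsilon<\tfrac{1}{2n}$ for the bump so that perturbing near $x_0$ leaves the values at $\tau_i(x_0)$ untouched. You allude to this separation qualitatively (``except on the fixed points'') but do not extract the uniform gap. Second, your plan speaks of isolating a \emph{single} sharp maximizer of $h_\varepsilon+d_\varepsilon$, but the quotient norm is realized at both the maximum and the minimum of $h$, so a single positive bump at a maximizer leaves the (possibly large) set of minimizers unchanged; this is why the worked example uses two bumps, $Q_\varepsilon$ near the max and $W_\varepsilon$ near the min, and the paper's proof produces two isolated realizers $x_0,x_1$ with no $\tau_i$-relations between them. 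Once you add that second bump your bookkeeping worry dissolves: the realizing set is $\{x_0,x_1\}$ and the only relations to exclude are the finitely many $x_j=\tau_i(x_k)$, which the uniform $\tfrac{1}{2n}$ gap and a small shift of the bump centers handle.
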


\begin{figure}[h!]
		\center
		\includegraphics[height=2cm, width=5cm]{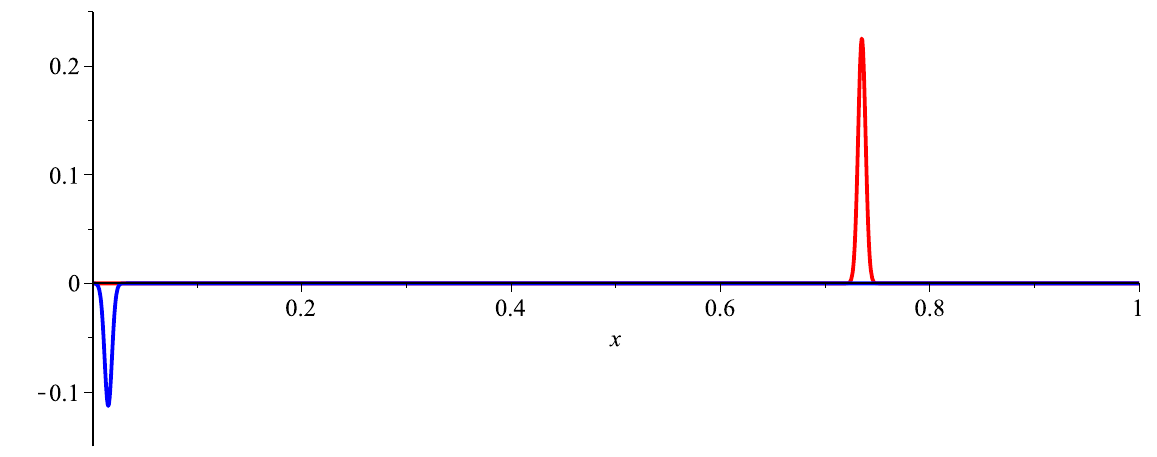}
		\caption{ $Q_{\varepsilon}$ (red) and $W_{\varepsilon}$(blue).}\label{x12}
	\end{figure}
As a consequence, taking $\mathfrak{A}=\mathcal{O}_{\Lambda}$, it will follow:
\begin{corollary} \label{cor} The set $\mathfrak{A}$ is dense. More precisely, if  $|f-g|=|f-g+d|=f(x_0)-g(x_0)+d$,
	then $f(\tau_{i}(x_0))-g(\tau_{i}(x_0))+d \neq f(x_0)-g(x_0)+d$ , for $i=1,2$.
\end{corollary}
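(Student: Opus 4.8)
I sketch a proof of Theorem~\ref{gen_property}, after which Corollary~\ref{cor} is just a rereading of the definition of $\beta$. For the \emph{openness} of $\mathcal{O}_{\Lambda,\delta}$, the plan is to check that $\beta$ is continuous on $[\delta,1-\delta]\times\mathcal{F}$: the quotient norm $(f,g)\mapsto|f-g|$ is $1$-Lipschitz, the evaluations $(x,f,g)\mapsto (f-g)(x)$ and $(x,f,g)\mapsto(f-g)(\tau_i(x))$ are continuous because the branches $\tau_i$ are Lipschitz, and $|\,\cdot\,|$ and $\min$ preserve continuity. Since $[\delta,1-\delta]$ is compact, $(f,g)\mapsto\mu_\delta(f,g):=\min_{x\in[\delta,1-\delta]}\beta(x,f,g)$ is continuous on $\mathcal{F}$, hence on any $\Lambda\subset\mathcal{F}$, and $\mathcal{O}_{\Lambda,\delta}=\{(f,g)\in\Lambda:\mu_\delta(f,g)>0\}$ is therefore open in $\Lambda$.

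For \emph{density}, fix $(f,g)$, set $d=\alpha_{f-g}$, and consider the norming set $N=\{x\in S^1:|(f-g+d)(x)|=|f-g|\}$, which is closed and nonempty. By the dichotomy recorded for $\beta$, one has $\beta(x,f,g)>0$ for every $x\in[\delta,1-\delta]$ as soon as no $x\in N$ has one of its branch-images $\tau_1(x),\tau_2(x)$ again in $N$; so it suffices to approximate $(f,g)$ by a pair whose norming set is a finite \emph{branch-generic} set $N'$, i.e. $\tau_1(N')\cap N'=\tau_2(N')\cap N'=\varnothing$ and $N'$ avoids the branch fixed point $0\in S^1$. To produce it I would keep $g$ fixed and add to $f$ a small modification of exactly the type in Example~\ref{yyy}: a positive mollified spike of height $\varepsilon$ at a generically placed point near the maximizer of $f-g+d$, together with shallow mollified negative troughs near the minimizer and near the remaining points of $N$. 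For $\varepsilon$ small this moves $(f,g)$ by less than any prescribed amount in $|\,\cdot\,|$ (and, with a slope-controlled bump, stays in $\mathcal{C}_{K+1}\times\mathcal{C}_{K+1}$ whenever $(f,g)\in\mathcal{C}_K\times\mathcal{C}_K$), while collapsing the new norming set onto the two chosen extremal points; an arbitrarily small further displacement of those two points removes the finitely many possible coincidences $\tau_i(x')=x''$ and pushes them off $0$, using that the inverse branches of the doubling map have only $0,1$ as fixed points. Hence $\mathcal{O}_{\Lambda,\delta}$ is dense in $\Lambda$ for every $\delta$.

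Now each $\mathcal{O}_{\Lambda,1/n}$ ($n>2$) is open and dense in $\Lambda$; a compact metric space is complete and hence a Baire space, so $\mathcal{O}_\Lambda=\bigcap_{n>2}\mathcal{O}_{\Lambda,1/n}$ is dense in $\Lambda$. Letting $\Lambda$ run through the compact sets $\mathcal{C}_K\times\mathcal{C}_K$ (dense in $\mathcal{C}\times\mathcal{C}$ as $K\to\infty$ by density of Lipschitz functions together with Arzel\`a--Ascoli) and taking $\mathfrak{A}=\mathcal{O}_\Lambda$ yields density of $\mathfrak{A}$ in $\mathcal{C}\times\mathcal{C}$. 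For the final assertion, if $(f,g)\in\mathcal{O}_\Lambda$ then $\beta(x,f,g)>0$ for every $x\in(0,1)$; applying this at a point $x_0$ with $|f-g|=|f-g+d|=f(x_0)-g(x_0)+d$, so that $|(f-g+d)(x_0)|=|f-g|$, the first alternative of the dichotomy is impossible, hence the second holds: $|(f-g+d)(\tau_i(x_0))|<|f-g|$ for $i=1,2$, and in particular $f(\tau_i(x_0))-g(\tau_i(x_0))+d\neq f(x_0)-g(x_0)+d$, which is Corollary~\ref{cor}.

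The one genuinely non-formal step is the perturbation in the density part, and that is where I expect the work to be. One must choose the spike concentrated enough that it really becomes the strict maximum and the troughs really depress the old norming set, yet small enough in $|\,\cdot\,|$ that neither the extreme values nor the normalizing constant $d$ shift appreciably, and (if one wants to stay inside a prescribed Lipschitz ball) small enough in slope as well. Simultaneously guaranteeing that the two surviving extremal points are branch-generic — no accidental relation $\tau_i(x')=x''$, neither equal to the fixed point $0$ — is precisely the bookkeeping illustrated in Example~\ref{yyy}, and is the heart of the argument.
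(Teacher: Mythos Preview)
Your proposal is correct and follows essentially the same architecture as the paper's proof: openness of $\mathcal{O}_{\Lambda,\delta}$ via continuity of $\beta$ and compactness of $[\delta,1-\delta]$; density via the $\varepsilon$-concentrated bump perturbations of Example~\ref{yyy}; and then reading off the corollary from the dichotomy for $\beta$. Your write-up is in places more explicit than the paper's---you invoke Baire to pass to the intersection $\bigcap_{n>2}\mathcal{O}_{\Lambda,1/n}$, and you indicate how to go from density in each compact $\Lambda=\mathcal{C}_K\times\mathcal{C}_K$ to density in $\mathcal{C}\times\mathcal{C}$---but the underlying ideas coincide. The one place the paper is sharper is in the density step: rather than arguing abstractly about collapsing the norming set and then nudging the surviving extrema to be branch-generic, the paper uses the concrete estimate $|\tau_i(x_0)-x_0|\ge \tfrac{1}{2n}$ for $x_0\in[\tfrac1n,1-\tfrac1n]$, which immediately guarantees that an $\varepsilon$-concentrated bump with $\varepsilon<\tfrac{1}{2n}$ cannot create a coincidence $\tau_i(x_0)=x_1$ among the new extremal points; this replaces your ``arbitrarily small further displacement'' bookkeeping with a single inequality.
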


\begin{proof}
	The first step in the proof of Theorem~\ref{gen_property} is the openness of
	$\mathcal{O}_{\Lambda,\frac{1}{n}}$.
	
	In this direction we observe that $\beta$ is continuous because the min operation and the sup-norm are continuous. Taking $(f_0,g_0) \in \mathcal{O}_{\Lambda,\frac{1}{n}}$ we obtain
	$\beta(x,f_0,g_0)>0,\; \forall x \in \left[\frac{1}{n}, 1-\frac{1}{n}\right],$ as we can see in the Figure \ref{fig:xx}.

	Using the compactness and the continuity we can take $\alpha>0$, such that, $\beta(x,f_0,g_0)>\alpha,\; \forall x \in [\frac{1}{n}, 1-\frac{1}{n}]$.
	Therefore, if $(f,g) \in \mathcal{U}$, where  $\mathcal{U}$ is   an open neighborhood of $(f_0,g_0)$, we  get
	$$\beta(x,f,g)-\frac{\alpha}{2}=\beta(x,f,g)-\beta(x,f_0,g_0)+\beta(x,f_0,g_0)-\alpha+\alpha -\frac{\alpha}{2}\geq$$
	$$\leq \beta(x,f_0,g_0)-\beta(x,f,g)+\beta(x,f,g)-\alpha+\alpha -\frac{\alpha}{2}> -\varepsilon_{x}+0+\frac{\alpha}{2}>0,$$
	if we choose $\varepsilon_{x}< \frac{\alpha}{2}$, where $\varepsilon_{x}$ is the continuity constant for the map $(f,g) \to \beta(x,f,g)$, for a fixed $x \in \left[\frac{1}{n}, 1-\frac{1}{n}\right]$.

		\begin{figure}[h!]
		\center
		\includegraphics[height=1.5cm, width=5cm]{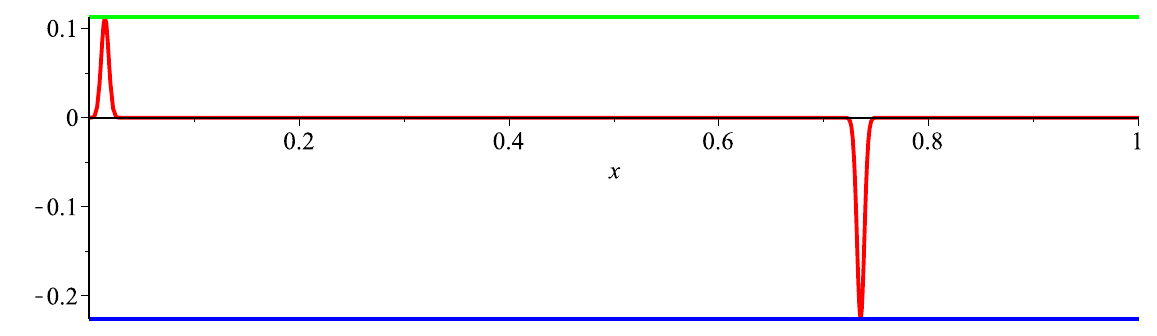}
		\caption{ $Calculating \,\,|-Q_{\varepsilon}-W_{\varepsilon}|$.}\label{x14}
	\end{figure}
	Since the interval $\left[\frac{1}{n}, 1-\frac{1}{n}\right]$ is compact we can take $0<\varepsilon \leq \varepsilon_{x},\; \forall x \in \left[\frac{1}{n}, 1-\frac{1}{n}\right]$.

	\begin{figure}[h!]
		\center
		\includegraphics[height=1.5cm, width=5cm]{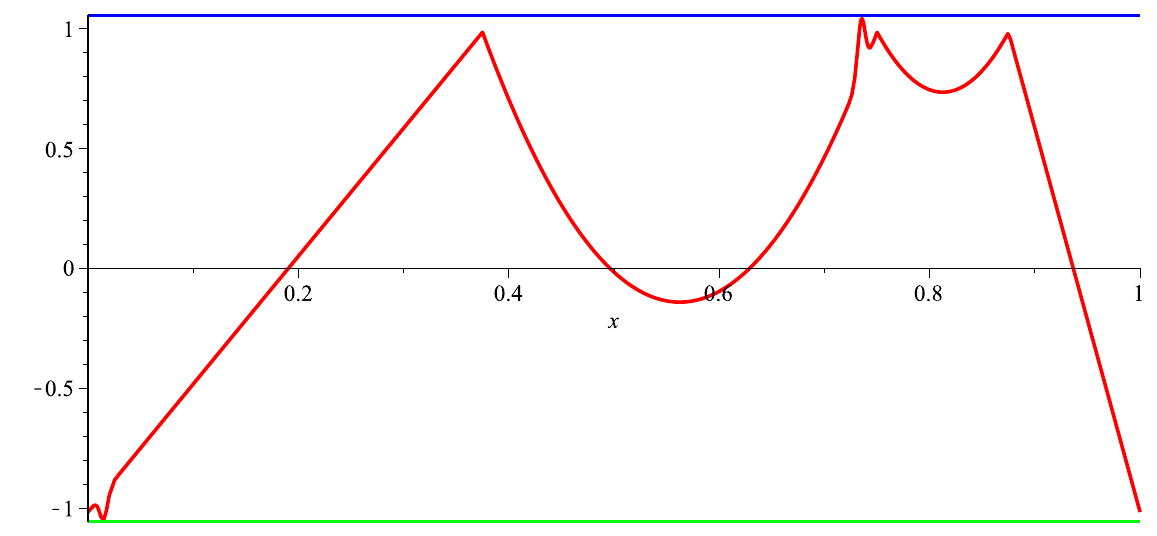}
		\caption{ $|f_{\varepsilon} -g_{\varepsilon}|=|f_{\varepsilon} -g_{\varepsilon} -(-0.985)|_{0}=(0.07-(-2.04))/2=1.055$.}\label{x5}
	\end{figure}

	This proves that the set
	$$\mathcal{U}_{\delta}:=$$
	$$_{\left\{ (f,g) \;|\;\text{ if }d((f,g),(f_0,g_0))< \delta, \text{ then } |\beta(x,f,g)-\beta(x,f_0,g_0)|<\varepsilon, \; \forall x \in \left[\frac{1}{n}, 1-\frac{1}{n}\right]\right\}}$$
	is an open neighborhood of $(f_0,g_0)$ in  $\mathcal{O}_{\Lambda,\frac{1}{n}}$.
	\begin{figure}[h!]
		\centering
		\includegraphics[height=3.3cm, width=13cm,trim=0in 2in 0in 2in, clip]{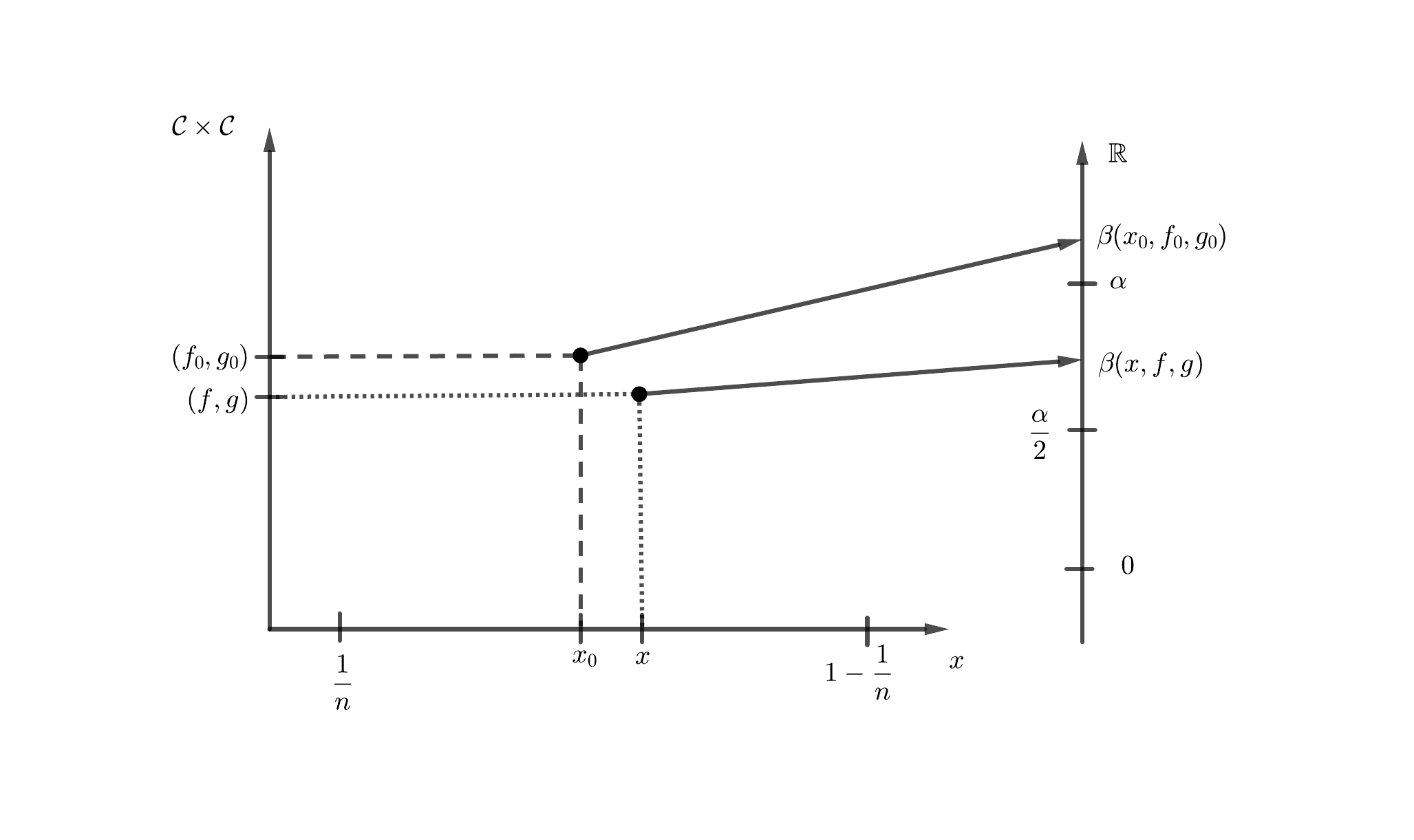}
		\caption{ Approximating $(x_0, f_0, g_0)$.}
		\label{fig:xx}
	\end{figure}

	In order to prove the density of $\mathcal{O}_{\Lambda,\frac{1}{n}}$ we observe that if $ x_0 \in \left[\frac{1}{n}, 1-\frac{1}{n}\right]$, then
	$\frac{1}{2n}+\frac{i}{2}-x_0\leq \tau_{i}(x_0) - x_0\leq \frac{i}{2}+\frac{1}{2}-\frac{1}{2n}-x_0.$
	Thus $|\tau_{i}(x_0) - x_0|\geq \frac{1}{2n}$ for all  $ x_0 \in [\frac{1}{n}, 1-\frac{1}{n}]$.
	
	Using this estimate we can apply an $\varepsilon$-concentrated perturbations with  $\varepsilon<\frac{1}{2n}$ (see Example \ref{yyy} for a constructive approach) obtaining  a pair $(f_{\varepsilon},g_{\varepsilon})$, in such way that, $g_{\varepsilon}=g$, $x_0$ and $x_1$ are the only points where $|f_{\varepsilon}-g_{\varepsilon}|=|f_{\varepsilon}(x_0)-g(x_0)+d|=
	|f_{\varepsilon}(x_1)-g(x_1)+d|$ and  $x_0 \neq \tau_0(x_1), \tau_2(x_1)$, $x_1 \neq \tau_0(x_0), \tau_2(x_0)$.
	
	In particular  $\beta(x,f_{\varepsilon},g)>0$, for any $ x \in \left[\frac{1}{n}, 1-\frac{1}{n}\right]$,
	which means that $(f_{\varepsilon},g_{\varepsilon})\in \mathcal{O}_{\Lambda,\frac{1}{n}}$.
\end{proof}

\medskip

\section{Perturbation theory:   close by the fixed point} \label{close}

In this section we analyze the question: when the calibrated subaction is unique is there an uniform exponential speed of  approximation   of the iteration $\mathcal{G}^n (f_0)$ to the subaction? The question makes sense close by the subaction $u$. The answer is no.
We will proceed a careful analysis of the action of $\mathcal{G}$ close by the fixed point $u\in \mathcal{C}.$

Section~\ref{close} is about the possibility of change a given point, in the neighborhood of a subaction, by a close one having different properties, with respect to the convergence rate of the operator $\mathcal{G}$.  It can't be used for genericity, as far as we know because we say nothing close to other points in the space.

In some examples we may consider a different dynamical system  on $X=[0,1]$ given by the maps $\tau_{i}(x)=\frac{1}{2}(i+1-x)$, for $i=0,1$, which are the inverse branches of $T(x)=-2x \mod 1$.

Our main task is to evaluate the effect of a perturbation on the nonlinear operator $\psi$ defined by
$$\psi(f)(x)= \max_{T(y)=x} (A+f)(y)= \max_{i=0,1} (A+f)(\tau_{i}(x))$$
for a fixed potential $A \in \mathcal{C}_k$.

\begin{figure}[h!] 
  \centering
  \includegraphics[scale=0.18,trim=0.5in 4in 0.5in 4in, clip]{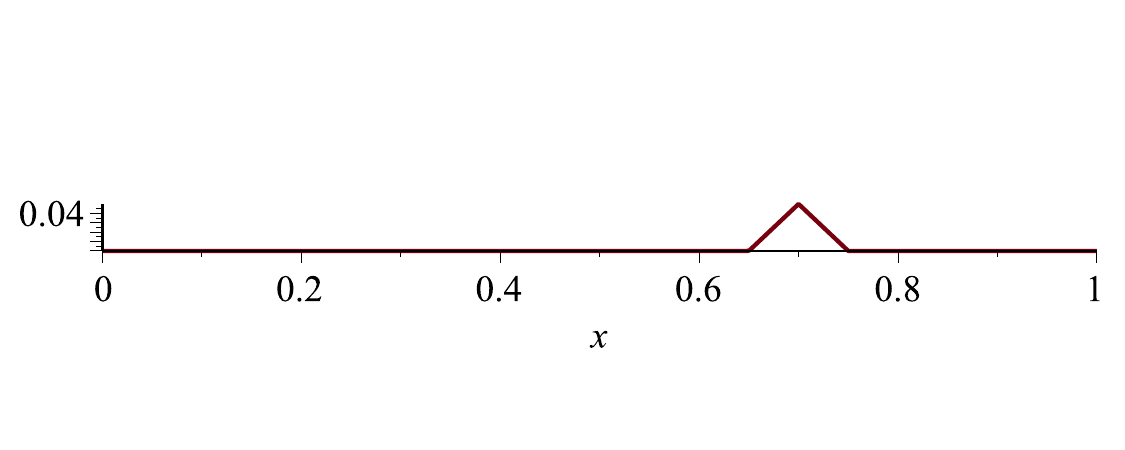}\qquad
  \includegraphics[scale=0.18,trim=0.5in 4in 0.5in 4in, clip]{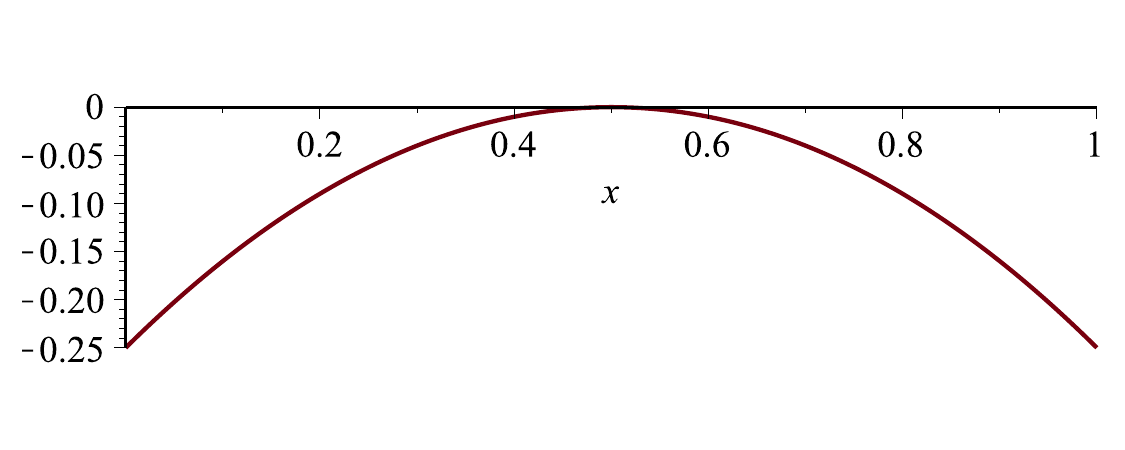}\qquad
  \includegraphics[scale=0.18,trim=0.5in 4in 0.5in 4in, clip]{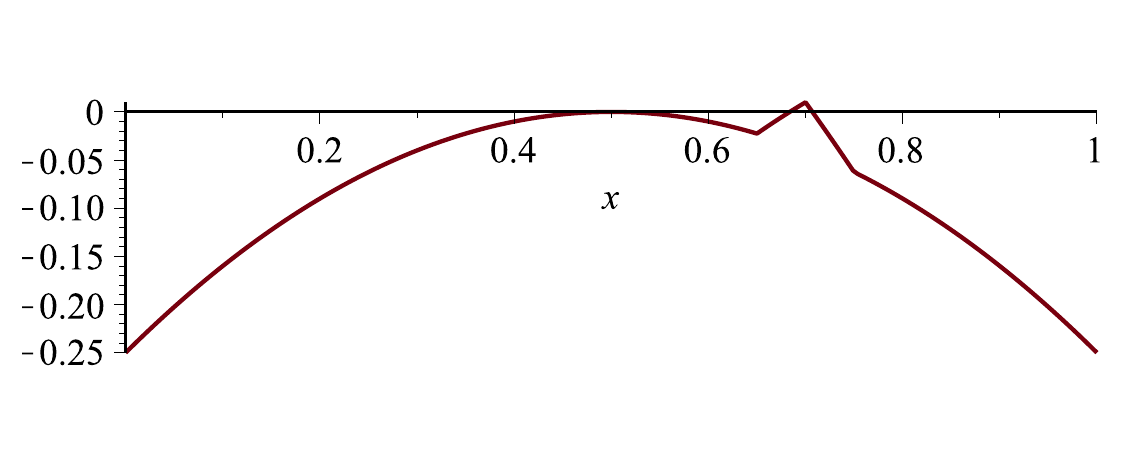}
  \caption{The graph of the function $\alpha_{0.05,0.7}$ in the left side, $f(x)=-(x-1/2)^2$ in the center  and $f_{0.05}=f(x)+ \alpha_{0.05,0.7}(x)$ in the right side.}\label{x1}
\end{figure}

The operator $H:=H_{A}$ given by
$$H(f)(x):=\frac{1}{2} f(x) + \frac{1}{2}\psi(f)(x),$$

Note that $\mathcal{G}:=\mathcal{G}_{A}$  is a normalized version of $H$
$$\mathcal{G}(f)(x):=H(f)(x) - \sup_{x \in X}H(f)(x).$$
It is usual to denote $\displaystyle c_{f}:=\sup_{x \in X}H(f)(x)$ then $H(f)(x)=\mathcal{G}(f) + c_{f}$ (normalization means that $\displaystyle \sup_{x \in X}\mathcal{G}(f)(x)=0$). Sometimes it is useful to look at the operator $H-Id$ given by
$(H-Id)(f):=\frac{1}{2}\psi(f)(x) - \frac{1}{2} f(x).$

We assume that there exists \textbf{a unique function $u \in \mathcal{C}_K$} such that $\mathcal{G}(u)=u$ (this is true if the maximizing probability is unique). Thus, $H(u)(x)=\mathcal{G}(u) + c_{u}=u(x)+ c_{u}$ where $\displaystyle c_{u}:=\sup_{x \in X}H(u)(x)$.

The above equation is equivalent to
$u(x)+c_{u} =\frac{1}{2} u(x) + \frac{1}{2}\psi(f)(x)$
which is equivalent to the sub-action equation
$$u(x)=\max_{i=0,1} (A-2c_{u} + u)(\tau_{i}(x)).$$

We can assume that $m_{A}=2c_{u}=0$ (by adding a constant to $A$) and then, $H(u)=u$.
It is useful to observe that under this assumption we also get   $\psi(u)=u$.

We start with a local perturbation lemma.

Let $\alpha_{\varepsilon,a}:X \to \mathbb{R}$ be a piecewise linear bump function defined by
\[ \alpha_{\varepsilon,a}(x)=
\left\{
  \begin{array}{ll}
    0, & 0 \leq x \leq a-\varepsilon \\
    kx-k(a-\varepsilon), & a-\varepsilon \leq x \leq a \\
    -kx+k(a+\varepsilon), & a\leq x \leq a+\varepsilon \\
    0, & a+\varepsilon \leq x \leq 1 ,\\
  \end{array}
\right.
\]
where $a \in (0,1)$ and $\varepsilon >0$ is arbitrary small.

\begin{lemma}\label{L1}
  If $f \in \mathcal{C}_K$,  then $f_{\varepsilon}=f(x)+ \alpha_{\varepsilon,a}(x) \in \mathcal{C}_K$. Moreover, $f_{\varepsilon}(x) \geq f(x)$ and $f_{\varepsilon}(x) = f(x)$ outside of the interval $[a-\varepsilon , \; a+\varepsilon]$. Finally, $|f_{\varepsilon} - f| = \frac{k\varepsilon}{2}$.
\end{lemma}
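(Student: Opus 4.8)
The plan is to verify each of the four assertions in Lemma~\ref{L1} directly from the explicit formula defining the bump function $\alpha_{\varepsilon,a}$, since every claim reduces to an elementary estimate on a piecewise linear function.

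First I would address the Lipschitz bound. The function $\alpha_{\varepsilon,a}$ is piecewise linear with slopes only in $\{0, k, -k\}$, and it is continuous at the breakpoints $a-\varepsilon$, $a$, $a+\varepsilon$; hence $\alpha_{\varepsilon,a}$ is itself $k$-Lipschitz. Here I would use the hypothesis (implicit in the notation $\mathcal{C}_K$) that $k \le K$ — indeed, the bump is built precisely so that its slope matches the Lipschitz constant of the ambient class. The subtle point is that adding two $K$-Lipschitz functions does not in general give a $K$-Lipschitz function, so one cannot simply say ``$f$ is $K$-Lipschitz and $\alpha_{\varepsilon,a}$ is $k$-Lipschitz, done.'' Instead I would argue pointwise on each of the three relevant subintervals: on $[a-\varepsilon,a]$ both $f$ and $\alpha_{\varepsilon,a}$ are increasing-compatible with slope at most $K$ in the sense needed, and more carefully, for $x<y$ in $[a-\varepsilon,a]$ one has $|f_\varepsilon(x)-f_\varepsilon(y)| = |f(x)-f(y) + k(x-y)| $, which need not be $\le K|x-y|$ unless $f$ is monotone there. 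This is the one genuine obstacle, and I expect the lemma as literally stated to require either that $k$ be chosen small relative to the local behavior of $f$, or (more likely, reading the surrounding text) that the intended reading is $\mathcal{C}_K$ with a specific $K$ and $k$ is taken with $k \le K$ and the ``bump'' only pushes $f$ up near a strict local maximum so that the combined slope still does not exceed $K$. In the write-up I would state the needed relation $k\le K$ explicitly and check the three subintervals, flagging that on the rising part the worst case is when $f$ decreases with slope $-K$, giving combined slope $k-K \ge -K$, and symmetrically on the falling part; so indeed $f_\varepsilon \in \mathcal{C}_K$ provided $0 < k \le K$.

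Next, the inequality $f_\varepsilon(x) \ge f(x)$ and the support statement are immediate: by construction $\alpha_{\varepsilon,a}(x) = 0$ for $x \notin [a-\varepsilon, a+\varepsilon]$, giving $f_\varepsilon = f$ there, and on $[a-\varepsilon,a+\varepsilon]$ the function $\alpha_{\varepsilon,a}$ is a nonnegative tent (it rises linearly from $0$ to $k\varepsilon$ at $x=a$, then falls back to $0$), so $\alpha_{\varepsilon,a} \ge 0$ everywhere and $f_\varepsilon \ge f$.

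Finally, for the norm computation $|f_\varepsilon - f| = \tfrac{k\varepsilon}{2}$, I would note that $f_\varepsilon - f = \alpha_{\varepsilon,a}$ as a function, so in $C^0$ we have $\max \alpha_{\varepsilon,a} = \alpha_{\varepsilon,a}(a) = k\varepsilon$ and $\min \alpha_{\varepsilon,a} = 0$. The quotient norm on $\mathcal{C}$ is $|h| = \inf_\alpha |h+\alpha|_0 = \tfrac{\max h - \min h}{2}$ (as recorded in the text after the definition of $\alpha_f$), so $|\alpha_{\varepsilon,a}| = \tfrac{k\varepsilon - 0}{2} = \tfrac{k\varepsilon}{2}$, which is exactly the claimed value. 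This closes the proof; the only place warranting care is the Lipschitz check in the first step, which I would present explicitly rather than by appeal to ``sum of Lipschitz functions.''
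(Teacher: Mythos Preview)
Your treatment of the last three assertions---$f_\varepsilon \ge f$, the support of $f_\varepsilon - f$, and the quotient-norm computation $|f_\varepsilon - f| = k\varepsilon/2$---is correct and amounts to a spelled-out version of the paper's own one-line proof (which just records $f_\varepsilon - f = \alpha_{\varepsilon,a}$ and $0 \le \alpha_{\varepsilon,a} \le k\varepsilon$).

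You are also right to flag the first assertion, $f_\varepsilon \in \mathcal{C}_K$, as the only non-trivial point; the paper's proof says nothing about it. However, your proposed resolution does not go through. You argue that on the rising part of the bump ``the worst case is when $f$ decreases with slope $-K$, giving combined slope $k-K \ge -K$,'' and conclude $f_\varepsilon \in \mathcal{C}_K$ whenever $0<k\le K$. This overlooks the opposite situation: if $f$ \emph{increases} with slope close to $K$ on $[a-\varepsilon,a]$, then $f_\varepsilon$ has slope close to $K+k>K$ there, and nothing in the hypotheses of the lemma rules this out. So the assertion $f_\varepsilon \in \mathcal{C}_K$ is in fact false as literally stated; the honest conclusion is only $f_\varepsilon \in \mathcal{C}_{K+k}$. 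Fortunately this weaker statement is all that is ever needed downstream---the subsequent perturbation lemmas and the examples use only the support, the sign, and the quotient norm of $\alpha_{\varepsilon,a}$, never the exact Lipschitz constant of $f_\varepsilon$.
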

\begin{proof}
The proof is straightforward because $|f_{\varepsilon} - f|= |\alpha_{\varepsilon,a}|$ and $0 \leq \alpha_{\varepsilon,a}(x) \leq k\varepsilon$.
\end{proof}

We will make the perturbations by choosing a fixed point $x_0 \neq 0, 1, 1/2$  in $X$  and $\varepsilon>0$, such that, the intervals $I=[x_0-\varepsilon , \; x_0+\varepsilon]$ and $T(I)=[T(x_0)-2\varepsilon , \; T(x_0)+ 2\varepsilon]$ are disjoint. Then, we take $f_{\varepsilon}=f(x)+ \alpha_{\varepsilon,a}(x)$ and we will try to estimate $\psi(f_{\varepsilon})$.\\

\begin{lemma}\label{L2}
  $\psi(f_{\varepsilon})=\psi(f)$ outside of $T(I)$.
\end{lemma}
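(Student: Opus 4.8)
The plan is to show that for every $x\notin T(I)$, each $T$-preimage of $x$ lies outside the support interval $I=[x_0-\varepsilon,\,x_0+\varepsilon]$ of the bump $\alpha_{\varepsilon,x_0}$; once this is established, the equality $\psi(f_{\varepsilon})(x)=\psi(f)(x)$ is immediate from the definition of $\psi$ as a maximum over the (two) inverse branches, together with Lemma~\ref{L1}, which tells us that $f_{\varepsilon}$ and $f$ agree outside $I$.

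First I would record the elementary geometric fact that $T(I)=\{T(y):y\in I\}$ as sets. This uses the standing assumptions made just before the lemma: $x_0\neq 0,1/2,1$ and $\varepsilon$ is small, so that $I$ contains no branch point of $T$; on such an interval $T$ is an affine homeomorphism of slope $\pm 2$, hence maps $I$ onto the interval $[\,T(x_0)-2\varepsilon,\,T(x_0)+2\varepsilon\,]$ (read mod $1$), which by hypothesis does not wrap around and is disjoint from $I$. Taking contrapositives gives the key step: if $x\notin T(I)$ and some inverse branch value $\tau_i(x)$ were to lie in $I$, then $x=T(\tau_i(x))\in T(I)$, a contradiction. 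Therefore $x\notin T(I)$ forces $\tau_i(x)\notin I$ for every inverse branch $\tau_i$.

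Now fix $x\notin T(I)$. By Lemma~\ref{L1} we have $f_{\varepsilon}=f$ outside $I$, so $f_{\varepsilon}(\tau_i(x))=f(\tau_i(x))$ for each $i$, and consequently
$$\psi(f_{\varepsilon})(x)=\max_{i}\,(A+f_{\varepsilon})(\tau_i(x))=\max_{i}\,(A+f)(\tau_i(x))=\psi(f)(x),$$
which is the assertion. (The argument is verbatim the same whether $T(x)=2x\bmod 1$ or $T(x)=-2x\bmod 1$, since only the slope $\pm 2$ and the absence of a branch point in $I$ are used.)

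I do not anticipate a real obstacle here: this is a localization statement and the proof is short. The only point demanding a line of care is the verification that $T(I)$ is genuinely the interval $[\,T(x_0)-2\varepsilon,\,T(x_0)+2\varepsilon\,]$ and that no preimage of a point outside it can slip into $I$ — both of which are guaranteed by the way $x_0$ and $\varepsilon$ were chosen (no branch point inside $I$, and $T(I)$ disjoint from $I$ and not wrapping mod $1$), so it suffices to invoke those hypotheses explicitly.
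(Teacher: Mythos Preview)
Your proof is correct and follows essentially the same approach as the paper: both verify that the interval $[T(x_0)-2\varepsilon,T(x_0)+2\varepsilon]$ coincides with the set-theoretic image $T(I)$, then use the contrapositive to conclude that no $T$-preimage of a point $x\notin T(I)$ can lie in $I$, whence $f_{\varepsilon}$ and $f$ agree on all such preimages and $\psi(f_{\varepsilon})(x)=\psi(f)(x)$. Your write-up is simply more explicit about the role of the hypotheses on $x_0$ and $\varepsilon$ that make $T|_I$ an affine homeomorphism onto the stated interval.
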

\begin{proof}
\begin{figure}[h!]
  \centering
  \includegraphics[width=4cm, height=2.2cm,trim=0in 1.5in 0in 1.5in, clip]{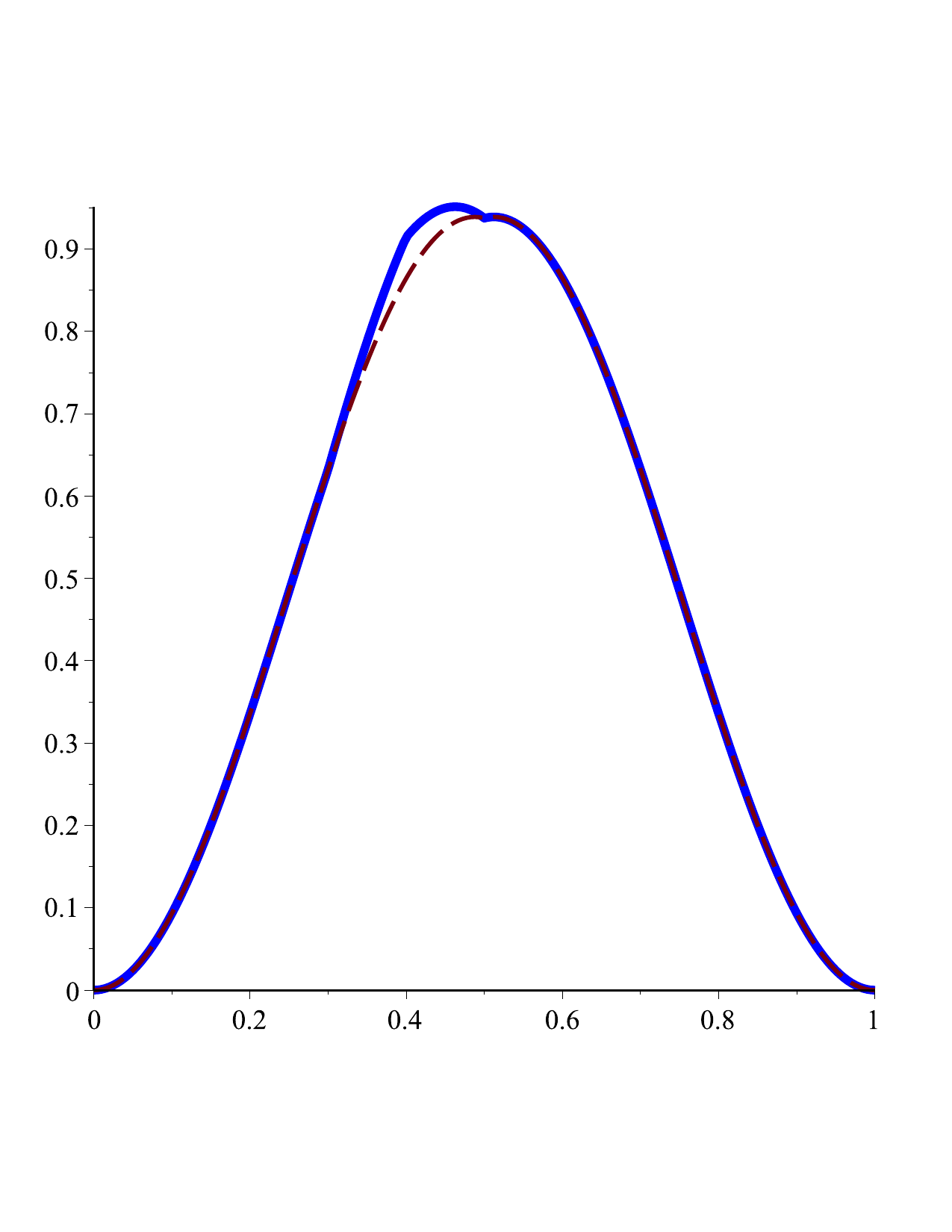}
  \caption{The graph of the functions $\psi(f_{\varepsilon})$ (blue line) and $\psi(f)$ (traced line) where, $A(x)=\sin^2(2\pi x)$, $f(x)=-(x-1/2)^2$  and $f_{0.1}=f(x)+ \alpha_{0.1,0.7}(x)$. The difference occurs only in the interval $T(I)=[0.2, \; 0.6]$ because $T(0.7)=0.4$ and $I=[0.6, \; 0.8]$.}\label{x2}
\end{figure}

  We notice that $A$ remains unchanged and $[T(x_0)-2\varepsilon , \; T(x_0)+ 2\varepsilon]=T([x_0-\varepsilon , \; x_0+\varepsilon])$. Therefore, for any $y$ such that $T(y)=x$ we can not have $y \in [x_0-\varepsilon , \; x_0+\varepsilon]$. Thus, $f_{\varepsilon}(y)=f(y)$, proving that $\psi(f_{\varepsilon})=\psi(f)$.
\end{proof}

Another question is about what happens in $T(I)$. For any $x$ in this interval one of its pre-images $y$ belongs to $I$ therefore $f_{\varepsilon}(y) \geq f(y)$. Thus, $\psi$ may change.

We recall that \textbf{a turning point $x$} (see also \cite{LOS} and \cite{LOT}) is a point where $(A+f)(\tau_1(x))=(A+f)(\tau_2(x))$. If $x$ is not a turning point then there exists \textbf{a dominant realizer}, that is,  $(A+f)(\tau_1(x))>(A+f)(\tau_2(x))$, or, $(A+f)(\tau_1(x))<(A+f)(\tau_2(x))$.

\begin{lemma}\label{L3}
  Suppose that $x_0$  is such that $T(x_0)$ is not a turning point and $j$ is the dominant symbol. Let $i \in \{0, \; 1\}$ be such that $\tau_{i}( T(x_0))= x_0$. We have two possible cases:
  \begin{itemize}
    \item If $j \neq i$, then $\psi(f_{\varepsilon})(x)=\psi(f)(x)$, for any $ x \in T(I)$.
    \item If $j = i$, then $\psi(f_{\varepsilon})(x)=\psi(f)(x)+ \alpha_{\varepsilon,x_0}(\tau_{j}(x)) \geq \psi(f)(x)$, for any $ x \in T(I)$ and $|\psi(f_{\varepsilon})(x) - \psi(f)(x)| = \frac{k\varepsilon}{2}$.
  \end{itemize}
\end{lemma}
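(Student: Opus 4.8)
The plan is to analyze the two pre-images of a point $x \in T(I)$ separately and track how the perturbation $\alpha_{\varepsilon,x_0}$, which is supported only on $I = [x_0-\varepsilon,\,x_0+\varepsilon]$, affects the max defining $\psi$. First I would fix $x \in T(I)$ and write $\psi(f_{\varepsilon})(x) = \max\{(A+f_{\varepsilon})(\tau_0(x)),\,(A+f_{\varepsilon})(\tau_1(x))\}$. Exactly one of the two pre-images $\tau_0(x),\tau_1(x)$ lies in $I$: since $x \in T(I) = T(I)$ and $T$ restricted to each inverse branch is a bijection onto $T(I)$, the pre-image lying in $I$ is $\tau_i(x)$ (because $\tau_i(T(x_0)) = x_0 \in I$ and $\tau_i$ is continuous/monotone, so $\tau_i$ maps $T(I)$ into a neighborhood of $x_0$ contained in $I$ once $I$ and $T(I)$ are disjoint — this is where the disjointness hypothesis is used), while $\tau_j$'s image stays outside $I$ when $j \neq i$. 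On the branch not in $I$ we have $f_{\varepsilon} = f$ by Lemma~\ref{L1}, and on the branch in $I$ we have $f_{\varepsilon}(\tau_i(x)) = f(\tau_i(x)) + \alpha_{\varepsilon,x_0}(\tau_i(x))$ with $\alpha_{\varepsilon,x_0} \geq 0$.

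Next I would use the hypothesis that $T(x_0)$ is not a turning point to get a strict inequality that is stable under small perturbations. Let $j \in \{0,1\}$ be the dominant symbol at $T(x_0)$, so $(A+f)(\tau_j(T(x_0))) > (A+f)(\tau_{j'}(T(x_0)))$ where $j'$ is the other symbol. By continuity of $A$, $f$, and the inverse branches, after possibly shrinking $\varepsilon$ this strict domination persists for all $x \in T(I)$: $(A+f)(\tau_j(x)) > (A+f)(\tau_{j'}(x))$. I would fold this shrinking into the standing choice of $\varepsilon$ (the statement already allows $\varepsilon$ small). Now split into the two cases of the lemma.

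In the case $j \neq i$: the dominant branch $\tau_j$ is the one not lying in $I$, so $(A+f_{\varepsilon})(\tau_j(x)) = (A+f)(\tau_j(x))$, while on the perturbed branch $(A+f_{\varepsilon})(\tau_i(x)) = (A+f)(\tau_i(x)) + \alpha_{\varepsilon,x_0}(\tau_i(x))$. Since $0 \leq \alpha_{\varepsilon,x_0} \leq k\varepsilon$ and the dominant branch beats the other by a fixed positive margin bounded below on the compact $T(I)$, shrinking $\varepsilon$ further if needed (again absorbed into the choice of $\varepsilon$) guarantees $(A+f)(\tau_j(x)) \geq (A+f)(\tau_i(x)) + \alpha_{\varepsilon,x_0}(\tau_i(x))$, hence the max is still attained on $\tau_j$ and $\psi(f_{\varepsilon})(x) = (A+f)(\tau_j(x)) = \psi(f)(x)$. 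In the case $j = i$: the dominant branch is the perturbed one, so $(A+f_{\varepsilon})(\tau_i(x)) = (A+f)(\tau_i(x)) + \alpha_{\varepsilon,x_0}(\tau_i(x)) \geq (A+f)(\tau_i(x)) > (A+f)(\tau_{j'}(x)) = (A+f_{\varepsilon})(\tau_{j'}(x))$, so the max is attained on $\tau_i = \tau_j$ and $\psi(f_{\varepsilon})(x) = (A+f)(\tau_j(x)) + \alpha_{\varepsilon,x_0}(\tau_j(x)) = \psi(f)(x) + \alpha_{\varepsilon,x_0}(\tau_j(x)) \geq \psi(f)(x)$. Finally, for the norm estimate $|\psi(f_{\varepsilon}) - \psi(f)| = \frac{k\varepsilon}{2}$, I would note $\psi(f_{\varepsilon}) - \psi(f) = \alpha_{\varepsilon,x_0}\circ\tau_j$ on $T(I)$ and $0$ elsewhere (Lemma~\ref{L2}), so it is a nonnegative function ranging in $[0,k\varepsilon]$ exactly as $\alpha_{\varepsilon,x_0}$ does (the change of variable $\tau_j$ is a surjection $T(I)\to I$); then the quotient-norm computation is identical to that in Lemma~\ref{L1}, giving $\frac{k\varepsilon}{2}$.

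The main obstacle is the bookkeeping around shrinking $\varepsilon$: one must check that the margin by which the dominant realizer wins is uniformly bounded below on the (compact) interval $T(I)$ and that this bound can be made to dominate $k\varepsilon$, so that the ``dominant symbol'' does not switch anywhere on $T(I)$ after perturbation. Once that uniform gap is in hand, everything else is a routine case split on whether the perturbed pre-image is the dominant one; care is also needed to confirm that precisely one pre-image of each $x \in T(I)$ lands in $I$, which is exactly what the assumed disjointness of $I$ and $T(I)$ together with injectivity of each inverse branch provides.
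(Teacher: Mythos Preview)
Your argument is correct and follows essentially the same route as the paper: use continuity of $A+f$ to ensure the dominant realizer at $T(x_0)$ remains dominant on all of $T(I)$ for $\varepsilon$ small, then split on whether the perturbed branch $\tau_i$ coincides with the dominant branch $\tau_j$. You are somewhat more explicit than the paper in justifying that exactly one pre-image of each $x\in T(I)$ lies in $I$, in invoking compactness for the uniform margin, and in carrying out the quotient-norm computation (which the paper states but does not write out); none of this constitutes a genuinely different approach.
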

\begin{proof}
  In the first case, in order to fix ideas we suppose, without lost of generality, $j=0$ and $i=1$, then $\tau_2( T(x_0))= x_0$ and $(A+f)(\tau_1(T(x_0)))>(A+f)(\tau_2(T(x_0)))$. By the continuity of $A+f$ we can choose $\varepsilon>0$ small enough in order to have $(A+f_{\varepsilon})(\tau_1(x))>(A+f_{\varepsilon})(\tau_2(x))$, for all $ x \in T(I)$. Therefore, $\psi(f_{\varepsilon})(x) = (A+f_{\varepsilon})(\tau_1(x))=(A+f)(\tau_1(x))= \psi(f)(x)$, for any $ x \in T(I)$.\\
  In the second case, $\tau_{j}( T(x_0))= x_0$ and $(A+f)(\tau_{j}(T(x_0)))>(A+f)(\tau_{i}(T(x_0)))$. Once more we use the continuity of $A+f$ to choose $\varepsilon>0$ small enough in order to have $(A+f_{\varepsilon})(\tau_{j}(x))>(A+f_{\varepsilon})(\tau_{i}(x))$, for all $ x \in T(I)$. Therefore, $\psi(f_{\varepsilon})(x) = (A+f_{\varepsilon})(\tau_{j}(x))=(A+f)(\tau_{j}(x))+ \alpha_{\varepsilon,x_0}(\tau_{j}(x))= \psi(f)(x)+ \alpha_{\varepsilon,x_0}(\tau_{j}(x))$, for any $ x \in T(I)$.
\end{proof}

Our first task is to compare $H(f)$ and $H(f_{\varepsilon})$. We can always assume that $T(I)$ and $I$ are disjoint so the perturbation $f \to f_{\varepsilon}$ acts separately in each one as described by the previous lemmas.

\begin{lemma} \label{L4}
  Let $f_{\varepsilon}$ a perturbation of $f$ and $x_0$ such that is not a pre-image of a turning point (with respect to $f$). Then, $H(f)(x) \leq H(f_{\varepsilon})(x)$,  with equality only outside of $[T(x_0)-2\varepsilon , \; T(x_0)+ 2\varepsilon] \cup [x_0-\varepsilon , \; x_0+\varepsilon]$. Moreover, $ H(f_{\varepsilon})(x)- H(f)(x)\leq \frac{k\varepsilon}{2} $. (We can prove similar results for $(H-Id)$.)
\end{lemma}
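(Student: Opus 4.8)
The plan is to decompose $X=[0,1]$ into the three regions governed by Lemmas~\ref{L1}, \ref{L2} and \ref{L3}, and to read off the behaviour of $H(f)=\tfrac12 f+\tfrac12\psi(f)$ on each one. First I would observe that, by hypothesis, $x_0$ is not a pre-image of a turning point of $f$; since any pre-image of a turning point $p$ satisfies $T(x_0)=p$, this says precisely that $T(x_0)$ is not a turning point, so Lemma~\ref{L3} is available for $x_0$ (and, as in its proof, we may shrink $\varepsilon$ so that the dominant branch on $T(I)$ does not change). As already noted, we may also assume that $I=[x_0-\varepsilon,\;x_0+\varepsilon]$ and $T(I)=[T(x_0)-2\varepsilon,\;T(x_0)+2\varepsilon]$ are disjoint (which is why $x_0\neq 0,1/2,1$); this is what lets the two effects of the perturbation — the pointwise increase of $f$ on $I$ from Lemma~\ref{L1}, and the possible increase of $\psi(f)$ on $T(I)$ from Lemma~\ref{L3} — be analysed independently.

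Then I would argue region by region. Off $I\cup T(I)$: here $f_\varepsilon=f$ since $\alpha_{\varepsilon,x_0}$ vanishes off $I$, and $\psi(f_\varepsilon)=\psi(f)$ by Lemma~\ref{L2}, hence $H(f_\varepsilon)=H(f)$. On $I$ (disjoint from $T(I)$): Lemma~\ref{L2} still gives $\psi(f_\varepsilon)=\psi(f)$ there, while $f_\varepsilon(x)=f(x)+\alpha_{\varepsilon,x_0}(x)$, so $H(f_\varepsilon)(x)-H(f)(x)=\tfrac12\alpha_{\varepsilon,x_0}(x)\in[0,\tfrac{k\varepsilon}{2}]$, with equality only at the endpoints of $I$. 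On $T(I)$ (disjoint from $I$): $f_\varepsilon(x)=f(x)$, and Lemma~\ref{L3} gives either $\psi(f_\varepsilon)(x)=\psi(f)(x)$ (the case $j\neq i$, so equality persists in $H$) or $\psi(f_\varepsilon)(x)=\psi(f)(x)+\alpha_{\varepsilon,x_0}(\tau_j(x))$ (the case $j=i$), and in the latter $H(f_\varepsilon)(x)-H(f)(x)=\tfrac12\alpha_{\varepsilon,x_0}(\tau_j(x))\in[0,\tfrac{k\varepsilon}{2}]$. Assembling the three cases yields $H(f)\leq H(f_\varepsilon)$ everywhere, the bound $H(f_\varepsilon)(x)-H(f)(x)\leq\tfrac{k\varepsilon}{2}$, and the fact that any strict inequality forces $x\in I\cup T(I)\subset[T(x_0)-2\varepsilon,\;T(x_0)+2\varepsilon]\cup[x_0-\varepsilon,\;x_0+\varepsilon]$, which is the ``equality outside'' assertion.

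For the parenthetical remark about $H-Id$, the same case analysis applies to $(H-Id)(f)(x)=\tfrac12\psi(f)(x)-\tfrac12 f(x)$, except that the signs in the two active regions are reversed: on $I$ the quantity \emph{decreases} by $\tfrac12\alpha_{\varepsilon,x_0}(x)$ (since $f$ grows while $\psi(f)$ is unchanged), while on $T(I)$ it increases by $\tfrac12\alpha_{\varepsilon,x_0}(\tau_j(x))$ in the case $j=i$ and is unchanged otherwise; hence $|(H-Id)(f_\varepsilon)(x)-(H-Id)(f)(x)|\leq\tfrac{k\varepsilon}{2}$, again with equality off $I\cup T(I)$. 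I do not expect a genuine obstacle here: the argument is essentially bookkeeping on top of Lemmas~\ref{L1}--\ref{L3}. The only steps that need care are the translation of ``$x_0$ is not a pre-image of a turning point'' into ``$T(x_0)$ is not a turning point'' so that Lemma~\ref{L3} applies, the implicit (harmless) shrinking of $\varepsilon$ inside that lemma to keep the dominant branch dominant on all of $T(I)$, and invoking the disjointness of $I$ and $T(I)$ explicitly so that the two perturbation effects do not interact.
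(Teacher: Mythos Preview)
Your proof is correct and is exactly what the paper intends: it states only that ``the proof is a direct consequence of the previous lemmas,'' and your region-by-region application of Lemmas~\ref{L1}, \ref{L2} and \ref{L3} (using the disjointness of $I$ and $T(I)$ and the translation of ``$x_0$ not a pre-image of a turning point'' into ``$T(x_0)$ not a turning point'') is precisely that direct consequence spelled out. Your observation that equality can also occur on $T(I)$ in the $j\neq i$ case (and at the endpoints of $I$) is a fair reading of the slightly loose phrasing in the statement.
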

The proof is a direct consequence of the previous lemmas.
\begin{figure}[h!]
  \centering
  \includegraphics[width=5.0cm, height=2.5cm,trim=0.5in 3in 0.5in 3in, clip]{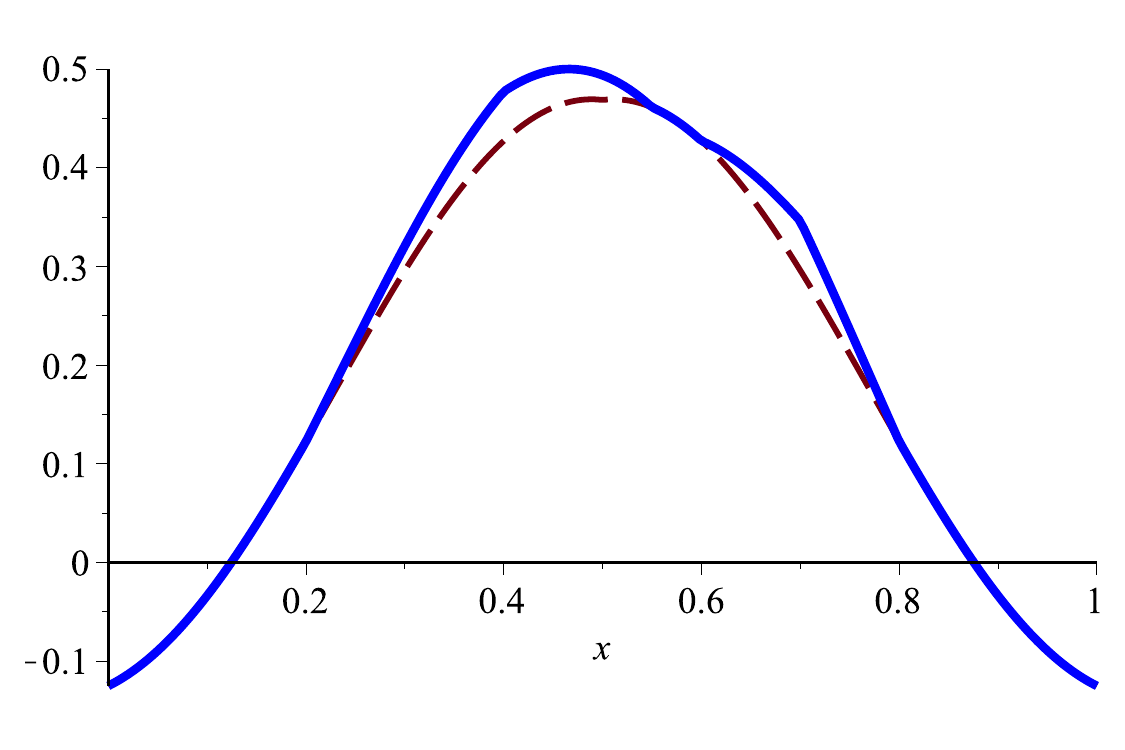}
  \caption{ The graph of the functions $H(f_{\varepsilon})$ (blue line) and $H(f)$ (traced line) where, $A(x)=\sin^2(2\pi x)$, $f(x)=-(x-1/2)^2$  and $f_{0.1}=f(x)+ \alpha_{0.1,0.7}(x)$. The difference occurs only in the interval $[0.2, \; 0.6] \cup [0.6,\; 0.8]$ because $T(0.7)=0.4$.}\label{x3}
\end{figure}

We want to study the relation between $|\mathcal{G}(f) - u|$ and $|f-u|$. We also want to see what happens when we make a perturbation $f \to f_{\varepsilon}$.\\

We start by choosing $d=\alpha_{f-u}$ such that $\delta=|f-u|=|f-u+d|_0$, then
$$-\delta \leq f(y)-u(y)+d \leq \delta,$$
for all $y \in X$.
Multiplying the above by $1/2$ we conclude that
$$-\frac{\delta}{2}   \leq \frac{1}{2}(f(y) -u(y))  +\frac{d}{2} \leq \frac{\delta}{2}.$$

Adding $A(y)$ we obtain the inequalities
$$-\delta \leq A(y)+f(y)-(A(y)+ u(y))+d \leq \delta,$$
and
$$-\delta + (A(y)+ u(y)) \leq A(y)+f(y) +d \leq \delta + (A(y)+ u(y)).$$
Taking the supremum in $y$, such that, $T(y)=x$, we get
$-\delta +\psi(u)(x) \leq \psi(f)(x) +d \leq \delta + \psi(u)(x).$
Multiplying by $1/2$ we conclude that
$$-\frac{\delta}{2}   \leq \frac{1}{2}(\psi(f)(x) -\psi(u)(x))  +\frac{d}{2} \leq \frac{\delta}{2}.$$

Note that
$$\mathcal{G}(f)(x)- u(x) +d = \mathcal{G}(f)(x) - \mathcal{G}(u)(x) +d =$$
 $$=\frac{1}{2} (f(x)-u(x)) +  \frac{1}{2} (\psi(f)(x)-\psi(u)(x)) -c_{f} + c_{u}+d=$$
 $$\frac{1}{2} (f(x)-u(x)+d) +  \frac{1}{2} (\psi(f)(x)-\psi(u)(x)+d) - c_{f}. $$

Using the inequalities $$-\frac{\delta}{2}   \leq \frac{1}{2}(\psi(f)(x) -\psi(u)(x))  +\frac{d}{2} \leq \frac{\delta}{2},$$
$$-\frac{\delta}{2}   \leq \frac{1}{2}(f(y) -u(y))  +\frac{d}{2} \leq \frac{\delta}{2},$$ and, the fact that $c_{u}=0$, we finally obtain
$$-\frac{\delta}{2}-\frac{\delta}{2} -c_{f} \leq \mathcal{G}(f)(x)- u(x) +d  \leq  \frac{\delta}{2}+\frac{\delta}{2}-c_{f},$$
and,
$$-\delta  \leq \mathcal{G}(f)(x)- u(x) +(d +c_{f}) \leq  \delta.$$
Therefore,
$$|\mathcal{G}(f)(x)- u(x) +(d +c_{f})| \leq \delta= |f-u|,$$
for all $x \in X$.

From this fundamental inequality we get a very important result about the operator $\mathcal{G}$.

  We recall that $|\mathcal{G}(f)(x)- u(x)| = \min_{\gamma} |\mathcal{G}(f)- u +\gamma |_{0} \leq |\mathcal{G}(f)- u + (d +c_{f})  |_{0}= \sup_{x \in X} |\mathcal{G}(f)(x)- u(x) +(d +c_{f})| \leq  |f-u|$.\\

\begin{theorem}\label{T1}
  Let $\mathcal{G}$ be the operator associated to $A$ and $u$ the fixed point ($\mathcal{G}(u)(x)= u(x)$), then,
  \begin{itemize}
    \item[a)] The contraction rate is controlled by $H -Id$;
    \item[b)] $|H(f) - f|_{0} \leq 2 |f-u|$;
    \item[c)]If $|H(f) - f|_{0} = \beta $, then
    $|\mathcal{G}(f)(x)- u(x) +(d +c_{f})|_{0} \geq  |f-u| -\beta$.
  \end{itemize}
\end{theorem}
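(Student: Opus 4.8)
The plan is to deduce all three items from the ``fundamental inequality'' $|\mathcal{G}(f)(x) - u(x) + (d + c_f)| \leq \delta := |f - u|$ established immediately above (with $d = \alpha_{f-u}$), by isolating a single algebraic identity behind the three statements. Since $\mathcal{G}(f) = H(f) - c_f$, for every $x \in X$ one has
$$\mathcal{G}(f)(x) - u(x) + (d + c_f) = H(f)(x) - u(x) + d = \big(H(f)(x) - f(x)\big) + \big(f(x) - u(x) + d\big) = (H - Id)(f)(x) + \big(f(x) - u(x) + d\big).$$
This identity is item (a): the ``error'' $\mathcal{G}(f) - u$, measured in the representative singled out by $d + c_f$, equals the ``error'' $f - u$, measured in the representative singled out by $d$, plus exactly $(H - Id)(f)$. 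Hence the gain obtained in one step of $\mathcal{G}$ is entirely governed by $(H - Id)(f)$; by item (b) this term is small whenever $f$ is close to $u$, which is precisely why the contraction degenerates near the fixed point.

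For item (b), I would use $\psi(u) = u$ (valid after the normalization $m_A = 2c_u = 0$) to write
$$H(f)(x) - f(x) = \tfrac12\psi(f)(x) - \tfrac12 f(x) = \Big[\tfrac12\big(\psi(f)(x) - \psi(u)(x)\big) + \tfrac d2\Big] - \Big[\tfrac12\big(f(x) - u(x)\big) + \tfrac d2\Big].$$
By the two displayed estimates obtained just before the theorem, each of the two bracketed quantities lies in $[-\delta/2,\,\delta/2]$, so their difference lies in $[-\delta,\,\delta]$; thus $|H(f) - f|_0 \leq \delta = |f-u|$, which is in fact slightly stronger than the asserted bound $2|f-u|$.

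For item (c), let $\delta = |f-u| = |f-u+d|_0$ and pick, using compactness of $X$, a point $x^{\ast}$ with $|f(x^{\ast}) - u(x^{\ast}) + d| = \delta$. If $|H(f) - f|_0 = \beta$, then by the reverse triangle inequality
$$|H(f)(x^{\ast}) - u(x^{\ast}) + d| \geq |f(x^{\ast}) - u(x^{\ast}) + d| - |H(f)(x^{\ast}) - f(x^{\ast})| \geq \delta - \beta.$$
Feeding this into the identity of the first paragraph, $|\mathcal{G}(f)(x) - u(x) + (d + c_f)|_0 \geq |H(f)(x^{\ast}) - u(x^{\ast}) + d| \geq |f - u| - \beta$, which is exactly item (c).

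The computations here are all short, so the only real care needed is bookkeeping: keeping the quotient norm $|\cdot|$ and the sup norm $|\cdot|_0$ apart, and making sure that the representative in which we estimate $\mathcal{G}(f) - u$ is the one (namely, shifted by $d + c_f$) for which the fundamental inequality was proved, so that the algebra lines up. Once the identity $\mathcal{G}(f) - u + (d + c_f) = (H - Id)(f) + (f - u + d)$ is written down, (a) is a restatement of it, (b) bounds the first summand, and (c) bounds the sum from below at the extremal point of the second summand; the expected ``obstacle'' is therefore only to phrase (a) — which is structural rather than quantitative — in a way that is genuinely useful in the sequel.
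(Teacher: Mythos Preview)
Your argument is correct. Items (a) and (c) match the paper almost verbatim: the paper also rewrites the fundamental inequality as $|[H(f)(x)-f(x)] + f(x)-u(x)+d| \leq |f-u+d|_0$ for (a), and for (c) applies the triangle inequality (at the level of sup-norms rather than at a single extremal point $x^\ast$, but this is cosmetic).

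The genuine difference is in (b). The paper derives $|H(f)-f|_0 \leq 2|f-u|$ directly from the identity in (a) by the triangle inequality:
\[
|H(f)(x)-f(x)| \leq |[H(f)(x)-f(x)] + f(x)-u(x)+d| + |f(x)-u(x)+d| \leq |f-u| + |f-u|.
\]
You instead exploit $\psi(u)=u$ to split $H(f)-f = \tfrac12(\psi(f)-\psi(u)) - \tfrac12(f-u)$ and bound each half by $\delta/2$ using the two displayed estimates preceding the theorem, obtaining the sharper $|H(f)-f|_0 \leq |f-u|$. Your route is slightly more economical and gives a constant that is twice as good; the paper's route has the minor advantage of not re-invoking $\psi(u)=u$ or the intermediate $\psi$-estimates, working purely from the identity of part (a). Either way the theorem follows.
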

\begin{proof}

  (a) We recall that $\mathcal{G}(f)(x)+c_{f}=H(f)$, thus,
  $$|\mathcal{G}(f)(x)- u(x) +(d +c_{f})| \leq  |f-u|$$
  $$|\mathcal{G}(f)(x)+c_{f} -f(x) + f(x)- u(x) +d| \leq  \sup_{x \in X}|f(x)-u(x)+d|$$
  $$|[H(f) -f(x)] + f(x)- u(x) +d| \leq  \sup_{x \in X}|f(x)-u(x)+d|$$
  $$\sup_{x \in X} |[H(f) -f(x)] + f(x)- u(x) +d| \leq  \sup_{x \in X}|f(x)-u(x)+d|.$$
  (b) Here we use the triangular inequality
  $$|H(f) -f(x)|\leq |[H(f) -f(x)] + f(x)- u(x) +d|+ |f(x)-u(x)+d| \leq 2 |f-u|.$$
  (c)  Using the triangular inequality we obtain
  $$|f-u|= |f-u+d|_{0}\leq |f-u+d + \mathcal{G}(f)(x)+c_{f} -f(x) - (\mathcal{G}(f)(x)+c_{f} -f(x))|_{0}\leq$$
   $$\leq |\mathcal{G}(f)(x)+c_{f} -f(x)+ f-u+d|_{0}+| \mathcal{G}(f)(x)+c_{f} -f(x)|_{0} =$$
    $$=|\mathcal{G}(f)(x)-u + (d+c_{f})|_{0}+| H(f)(x) -f(x)|_{0}= |\mathcal{G}(f)(x)-u + (d+c_{f})|_{0}+\beta,$$
   or, equivalently,
   $$|\mathcal{G}(f)(x)-u + (d+c_{f})|_{0} \geq |f-u| -\beta.$$
\end{proof}

\begin{figure}[h!]
  \centering
  \includegraphics[width=4cm, height=2.5cm,trim=0.5in 3.5in 0.5in 4in, clip]{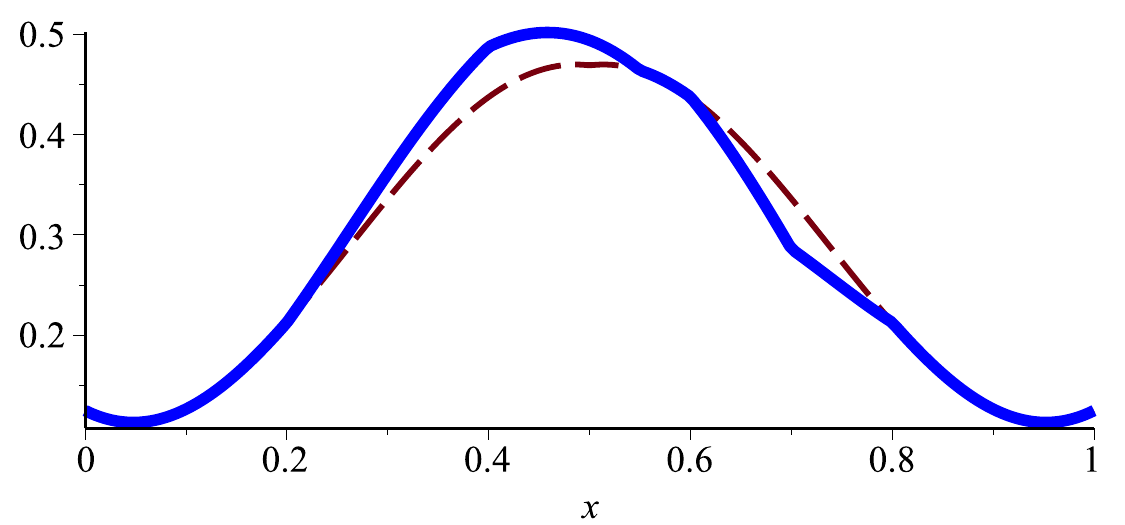}
  \caption{ Functions $(H-Id)(f_{\varepsilon})$ (blue line) and $(H-Id)(f)$ (traced line) where, $A(x)=\sin^2(2\pi x)$, $f(x)=-(x-1/2)^2$  and $f_{0.1}=f(x)+ \alpha_{0.1,0.7}(x)$. The difference occurs only in the interval $[0.2, \; 0.6]$, where the perturbation is bigger, and,  the interval $ [0.6,\; 0.8]$, where the perturbation is smaller, because $T(0.7)=0.4$.}\label{x4}
\end{figure}

We are dealing with a kind of technical problem: $| p(x)+ q(x)| \leq |q|_0, \; \forall x \in X$, where $\max q= -\min q$. In our case, $p(x)= H(f) -f(x)$ and $q(x)=  f(x)- u(x) +d$ are continuous functions. The first observation is that $| p(x)+ q(x)| \leq |q|_0, \; \forall x \in X$, is equivalent to $-|q|_0 - q(x)\leq  p(x) \leq |q|_0 - q(x)$. From this we can get interesting examples.

\begin{example}\label{E1}
  Consider $p(x)= -4\, \left( x-1/2 \right) ^{2}$ and $q(x)=\cos \left( 2\,\pi \,x \right)$. It is easy to see that $|q|_0=\max q= -\min q=1$ and the inequality $-1 - q(x)\leq  p(x) \leq 1 - q(x)$ is described in the Figure \ref{x5}.
  \begin{figure}[h!]
  \centering
  \includegraphics[width=4cm, height=3cm,trim=0.5in 1.5in 0.5in 1.6in, clip]{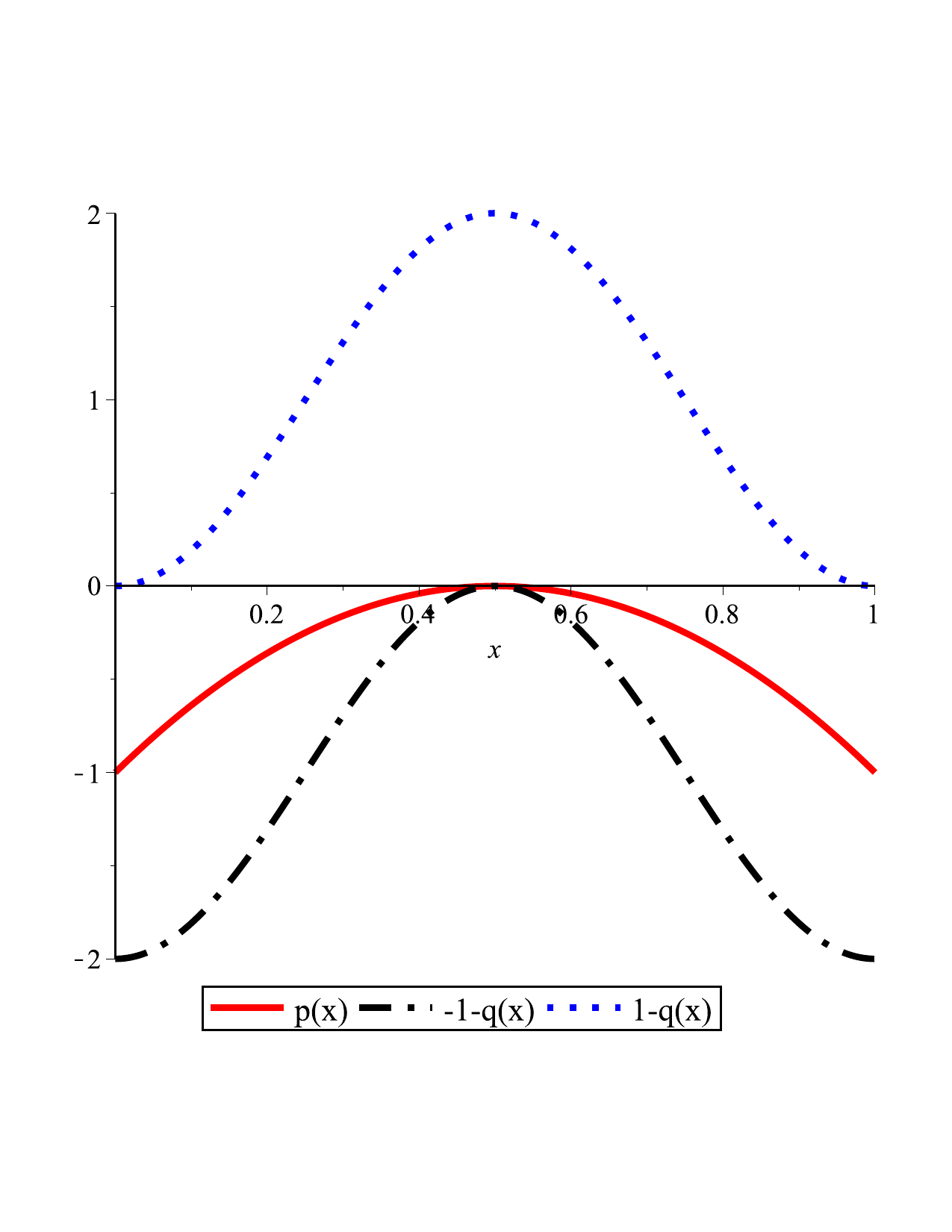}
  \caption{ Functions $-1-q(x)$ and $1-q(x)$.}\label{x5}
\end{figure}
A simple calculation shows that $|p+q|_0=1=|q|_0$, but $|p+q|= |p+q+0.414|_0=0.586$.

The property $\max q= -\min q$ means that $|q|=|q+0|_0$, therefore, $|p+q|=0.586<1=|q|$.
\end{example}

\begin{lemma} \label{L5}
  Consider $| p(x)+ q(x)| \leq |q|_0, \; \forall x \in X$, with $\max q= -\min q$. Then, there exists $z \in X$, such that, $p(z)=0$. In particular, taking $p(x)= H(f) -f(x)$ and $q(x)=  f(x)- u(x) +d$, we have
  $$f(z)=\max_{T(y)=z} A(y) + f(y).$$
\end{lemma}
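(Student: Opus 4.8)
The plan is to exploit the fact that $q$ achieves both $+|q|_0$ and $-|q|_0$ (since $\max q = -\min q = |q|_0$), and combine this with the pointwise bound $|p(x)+q(x)| \leq |q|_0$ to pin down the sign of $p$ at these extremal points. First I would pick $z_+$ with $q(z_+) = |q|_0$; then the inequality gives $-|q|_0 \leq p(z_+) + |q|_0 \leq |q|_0$, so $-2|q|_0 \leq p(z_+) \leq 0$, hence $p(z_+) \leq 0$. Symmetrically, picking $z_-$ with $q(z_-) = -|q|_0$ gives $p(z_-) \geq 0$.

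Then I would invoke the intermediate value theorem: since $p$ is continuous on $X$ (being a difference of continuous functions, as noted in the paragraph preceding Example~\ref{E1}) and $X=[0,1]$ is connected, there is a point $z$ between $z_+$ and $z_-$ with $p(z)=0$. Specializing to $p(x) = H(f)(x) - f(x)$ and recalling $H(f)(x) = \tfrac12 f(x) + \tfrac12 \psi(f)(x)$, the equation $p(z)=0$ reads $\tfrac12 f(z) + \tfrac12 \psi(f)(z) = f(z)$, i.e. $f(z) = \psi(f)(z) = \max_{T(y)=z}(A+f)(y) = \max_{T(y)=z} A(y)+f(y)$, which is the claimed conclusion.

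The one subtlety — though it is minor — is making sure the hypotheses of Lemma~\ref{L5} are actually realized by $p(x)=H(f)-f(x)$ and $q(x) = f(x)-u(x)+d$: the pointwise inequality $|p(x)+q(x)| \le |q|_0$ is exactly the fundamental inequality $|\mathcal{G}(f)(x)-u(x)+(d+c_f)| \le |f-u|$ established just before Theorem~\ref{T1} (using $\mathcal{G}(f)+c_f = H(f)$ and $\delta = |f-u| = |f-u+d|_0$), and $\max q = -\min q$ holds because $d = \alpha_{f-u}$ is chosen precisely so that $|f-u+d|_0 = |f-u|$, which forces the centered function $f-u+d$ to have symmetric max and min. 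I do not expect any real obstacle here; the argument is a short continuity/extremal-value observation, and the main point to state carefully is simply that the extremal points of $q$ exist (compactness of $X$) and that $p$ is continuous.
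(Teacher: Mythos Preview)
Your proof is correct and follows essentially the same approach as the paper: pick extremal points $z_+$, $z_-$ of $q$ to force $p(z_+)\le 0$ and $p(z_-)\ge 0$, then apply the intermediate value theorem, and finally unwind $p(z)=0$ using $H(f)=\tfrac12 f+\tfrac12\psi(f)$. Your added paragraph verifying that the hypotheses are met for $p=H(f)-f$ and $q=f-u+d$ is a welcome clarification that the paper leaves implicit.
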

\begin{proof}
  We already know that there exists $x_0$ such that $|q|_0=q(x_0)$, therefore,
  $p(x_0)+ q(x_0) \leq |q|_0=q(x_0)$, or, equivalently,   $p(x_0) \leq 0.$ Analogously, there exists $x_1$ such that $|q|_0=-q(x_1)$ and $p(x_1) \geq 0.$ Unless $q=cte$ we can always suppose that $x_0 \neq x_1$. If $p(x_0)= 0$ or $p(x_1)= 0$ the problem is solved. Otherwise, if $p(x_0)< 0$ and $p(x_1)> 0$ the intermediate value theorem for continuous functions claims that there exists $z \in [x_0, \; x_1]$, such that, $p(z)=0$.

  Note that for $p(x)= H(f) -f(x)$, the equation $p(z)=0$ is equivalent to $\displaystyle f(z)=\max_{T(y)=z} A(y) + f(y).$
\end{proof}

The behaviour of $|\mathcal{G}(f)(x)-u + (d+c_{f})|_{0}$ may be very different from $|\mathcal{G}(f)-u |$. On the one hand $|\mathcal{G}(f)-u | \leq |\mathcal{G}(f)-u + (d+c_{f})|_{0} \leq |f-u|$ and on the other hand we can find $f$ arbitrarily close to $u$, such that, $|\mathcal{G}(f)-u |= \frac{1}{4} \leq |f-u|$.

\begin{figure}[h!]
  \centering
  \includegraphics[width=4cm, height=1.5cm,trim=0.5in 4in 0.5in 4in, clip]{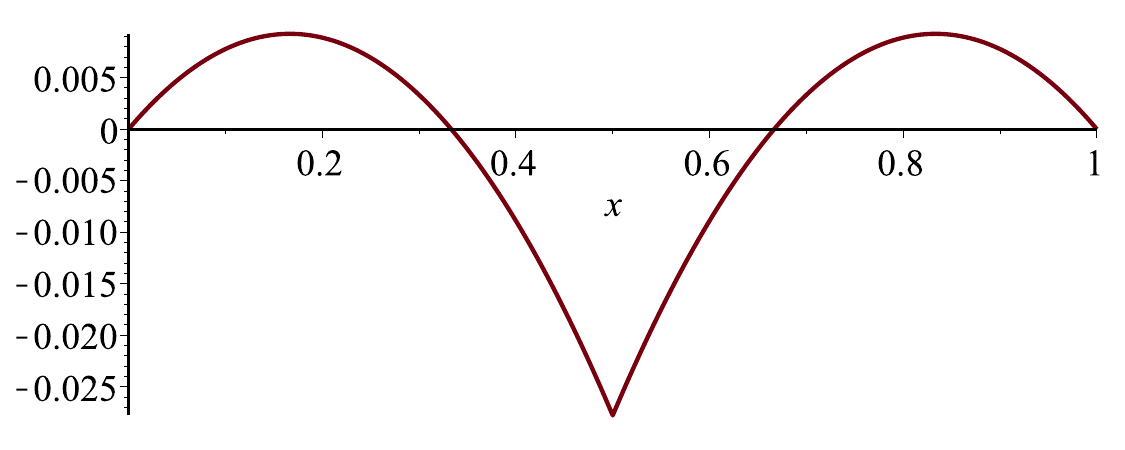} \qquad \includegraphics[width=4cm, height=1.5cm,trim=0.5in 4in 0.5in 4in, clip]{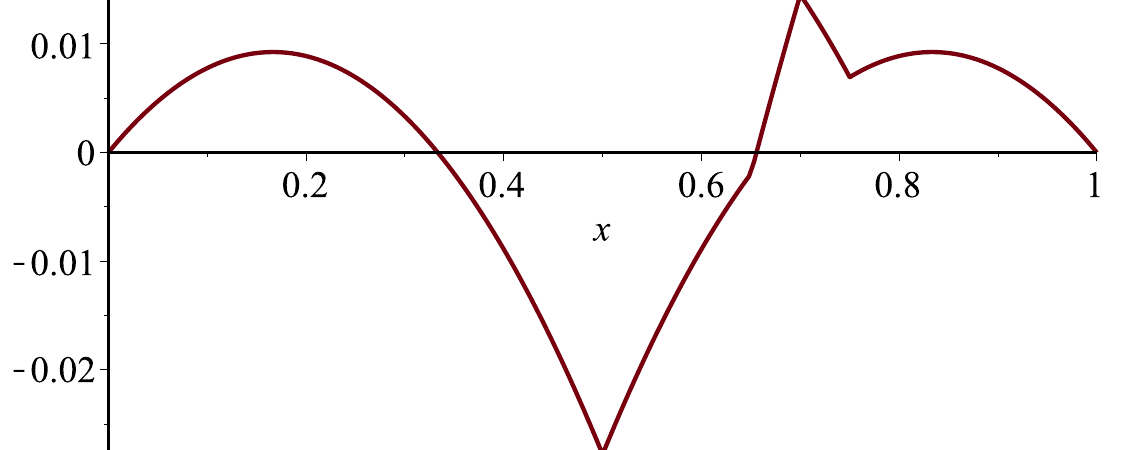}
  \caption{ In the left side the graph of $u$ and in the right side the graph of   $f_{\varepsilon}$.}\label{x6}
\end{figure}

\begin{lemma} \label{L6}
  Let $u$ be the only sub-action of $A$ ($m_A=0$). Let $f_{\varepsilon}=u + \alpha_{\varepsilon,x_0}$ a perturbation of $f$ and take $x_0$ not a pre-image of a turning point (with respect to $f$). Then, $|\mathcal{G}(f_{\varepsilon})-u |= \frac{1}{2}  |f_{\varepsilon}-u|$ and $|f_{\varepsilon}-u|=\frac{k \varepsilon}{2}$.
\end{lemma}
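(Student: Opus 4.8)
The plan is to reduce everything to the explicit description of $\psi(f_{\varepsilon})$ furnished by Lemmas~\ref{L2} and~\ref{L3}, and then to a one-line computation with the quotient norm, using the formula $|h|=\tfrac12(\max h-\min h)$. The second identity is immediate: applying Lemma~\ref{L1} with base function $u$ gives $f_{\varepsilon}=u+\alpha_{\varepsilon,x_0}\in\mathcal{C}_K$ and $|f_{\varepsilon}-u|=|\alpha_{\varepsilon,x_0}|$, and since $0\le\alpha_{\varepsilon,x_0}\le k\varepsilon$ with both bounds attained this equals $\tfrac{k\varepsilon}{2}$.

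For the main identity I would first note that the normalizing constant is irrelevant: since $\mathcal{G}(f_{\varepsilon})=H(f_{\varepsilon})-c_{f_{\varepsilon}}$ with $c_{f_{\varepsilon}}\in\mathbb{R}$ and the quotient norm is invariant under adding constants, $|\mathcal{G}(f_{\varepsilon})-u|=|H(f_{\varepsilon})-u|$. Because $m_A=0$ we have $H(u)=u$, hence $\psi(u)=u$, and therefore $H(f_{\varepsilon})-u=\tfrac12(f_{\varepsilon}-u)+\tfrac12(\psi(f_{\varepsilon})-\psi(u))=\tfrac12\alpha_{\varepsilon,x_0}+\tfrac12(\psi(f_{\varepsilon})-u)$. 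Since $x_0$ is not a pre-image of a turning point of $u$, Lemma~\ref{L3} applies: there is a dominant symbol $j$ at $T(x_0)$, and writing $i$ for the index with $\tau_i(T(x_0))=x_0$ there are two cases. If $j\ne i$, Lemmas~\ref{L2} and~\ref{L3} give $\psi(f_{\varepsilon})=u$ identically, so $H(f_{\varepsilon})-u=\tfrac12\alpha_{\varepsilon,x_0}$. If $j=i$, the same lemmas give $\psi(f_{\varepsilon})=u+\gamma$, where $\gamma$ denotes the function equal to $\alpha_{\varepsilon,x_0}\circ\tau_j$ on $T(I)$ and to $0$ elsewhere (a nonnegative bump of height $k\varepsilon$ attained at $T(x_0)$); thus $H(f_{\varepsilon})-u=\tfrac12\alpha_{\varepsilon,x_0}+\tfrac12\gamma$.

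It then remains to compute the norm. In both cases $H(f_{\varepsilon})-u$ is continuous, nonnegative, and vanishes on the nonempty complement of $I\cup T(I)$, so its minimum is $0$; its maximum equals $\tfrac{k\varepsilon}{2}$ — in the first case because $\max\alpha_{\varepsilon,x_0}=k\varepsilon$, and in the second because the standing assumption that $I$ and $T(I)$ are disjoint forces the two bumps $\tfrac12\alpha_{\varepsilon,x_0}$ and $\tfrac12\gamma$, each of height $\tfrac{k\varepsilon}{2}$, to have disjoint supports, so the maximum of their sum is again $\tfrac{k\varepsilon}{2}$. Hence $|\mathcal{G}(f_{\varepsilon})-u|=|H(f_{\varepsilon})-u|=\tfrac12\bigl(\tfrac{k\varepsilon}{2}-0\bigr)=\tfrac{k\varepsilon}{4}=\tfrac12\,|f_{\varepsilon}-u|$, as claimed.

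I do not expect a serious obstacle: the delicate part has already been absorbed into Lemma~\ref{L3}, namely the choice of $\varepsilon$ small enough (by continuity of $A+u$) so that the perturbation neither destroys the domination of the symbol $j$ near $T(x_0)$ nor creates new turning points there. Given that, the entire content of the lemma is the observation that $H=\tfrac12(\mathrm{Id}+\psi)$ exactly halves a one-sided perturbation of $u$ whose two resulting nonnegative bumps live over the disjoint sets $I$ and $T(I)$, so that no cancellation and no reinforcement occurs and the factor $\tfrac12$ is preserved in the norm.
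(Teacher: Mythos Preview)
Your proposal is correct and follows essentially the same route as the paper: reduce to $|H(f_\varepsilon)-u|$ by discarding the normalizing constant, write $H(f_\varepsilon)-u=\tfrac12\alpha_{\varepsilon,x_0}+\tfrac12(\psi(f_\varepsilon)-\psi(u))$, and use Lemmas~\ref{L2} and~\ref{L3} together with the disjointness of $I$ and $T(I)$ to read off $\min=0$ and $\max=\tfrac{k\varepsilon}{2}$. Your version is in fact a bit more explicit than the paper's, since you separate the two cases of Lemma~\ref{L3} and spell out why the two bumps cannot reinforce each other; the paper simply asserts the values of the max and min.
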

\begin{proof}
  First, we observe that $|f_{\varepsilon}-u|=|\alpha_{\varepsilon,x_0}|=\frac{ \max \alpha_{\varepsilon,x_0}-\min \alpha_{\varepsilon,x_0}}{2}=\frac{k \varepsilon-0}{2}=\frac{k \varepsilon}{2}$.

  Rewriting  $|\mathcal{G}(f_{\varepsilon}) -u |$ we obtain
  $$|\mathcal{G}(f_{\varepsilon}) -u |=|H(f_{\varepsilon})-c_{f_{\varepsilon}}-u|=|H(f_{\varepsilon})-u|=|\frac{1}{2} f_{\varepsilon} + \frac{1}{2}\psi(f_{\varepsilon})-u|=$$
  $$=|\frac{1}{2} (u + \alpha_{\varepsilon,x_0}) + \frac{1}{2}\psi(f_{\varepsilon})- \psi(u)|=|\frac{1}{2} \alpha_{\varepsilon,x_0} + \frac{1}{2}(\psi(f_{\varepsilon})- \psi(u))|.$$
  The function $\alpha_{\varepsilon,x_0}$ is zero outside of the set $[x_0-\varepsilon , \; x_0+\varepsilon]$, and, $\psi(f_{\varepsilon})- \psi(u)=0$ outside of the set $[T(x_0)-2\varepsilon , \; T(x_0)+ 2\varepsilon]$ by Lemma~\ref{L2}.

  \begin{figure}[h!]
  \centering
  \includegraphics[width=5cm, height=2.5cm,trim=0.5in 3in 0.5in 3in, clip]{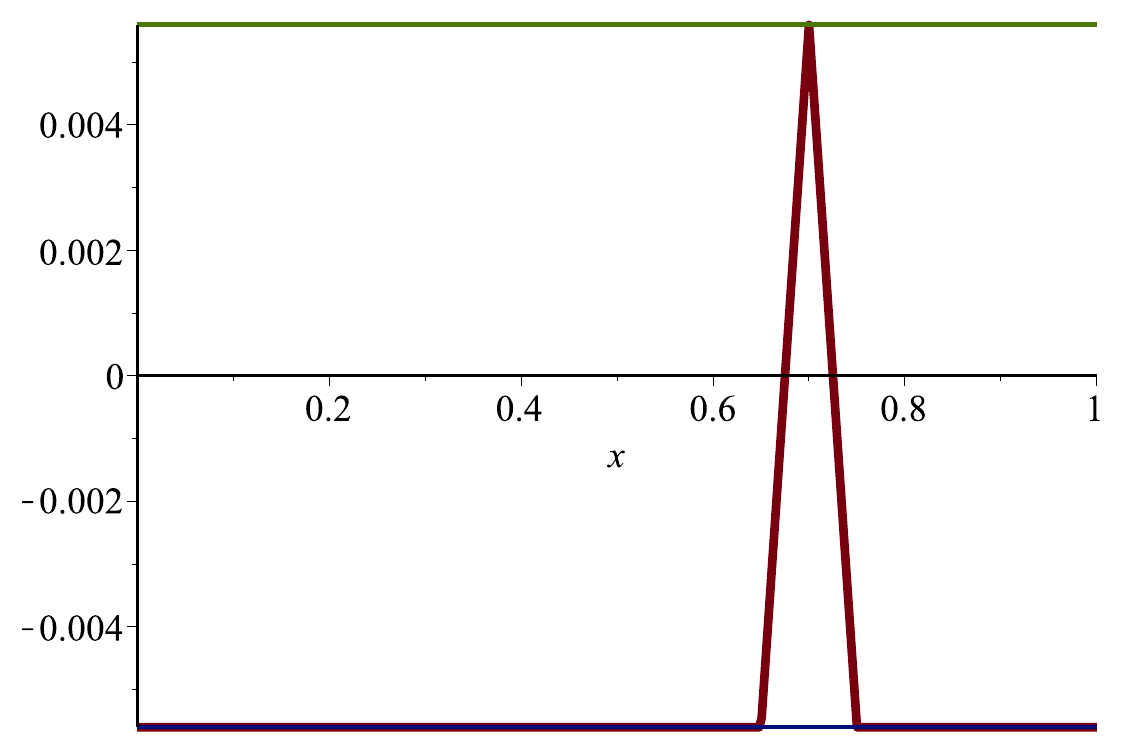} \qquad \includegraphics[width=5cm, height=2cm,trim=0.5in 4in 0.5in 4in, clip]{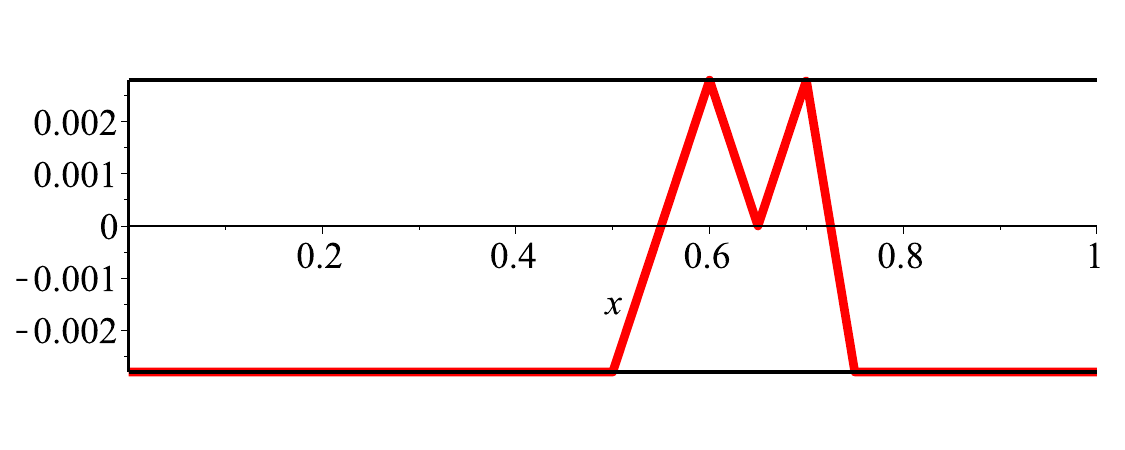}
  \caption{ In the left  the graph of $f_{\varepsilon} \left( x \right) -u \left( x \right) -\frac{k\varepsilon}{2}$ and in the right the one for  $1/2\,f_{\varepsilon} \left( x \right) +1/2\,\psi \left( f_{\varepsilon} \right)(x) -u \left( x \right) -\frac{k\varepsilon}{4} $.}\label{x7}
\end{figure}

  Therefore, the  $\min \frac{1}{2} \alpha_{\varepsilon,x_0} + \frac{1}{2}(\psi(f_{\varepsilon})- \psi(u))=0$, and, $\max \frac{1}{2} \alpha_{\varepsilon,x_0} + \frac{1}{2}(\psi(f_{\varepsilon})- \psi(u)) = \frac{k \varepsilon}{2}$. By definition $|\mathcal{G}(f_{\varepsilon})(x)-u |= \frac{k \varepsilon}{4}$.
\end{proof}

\begin{example}\label{E2}
   Consider the dynamics $T(x)=-2x \mod 1$.

   Let $A(x)=-(x-\frac{1}{2})^2+ \frac{1}{36}$ be the potential and  $u$ the subaction (see Figures \ref{x6}, \ref{x7} and \ref{x9})
   \[ u(x)= \left\{
       \begin{array}{ll}
         -1/3\,{x}^{2}+x/9,          & 0 \leq x  \leq 1/2 \\
         -1/3\,{x}^{2}+5/9\,x-2/9,  & 1/2\leq x\leq 1.
       \end{array}
     \right.
   \]

\begin{figure}[h!]
  \centering
  \includegraphics[width=5cm, height=3cm,trim=0.5in 2.8in 0.5in 2.8in, clip]{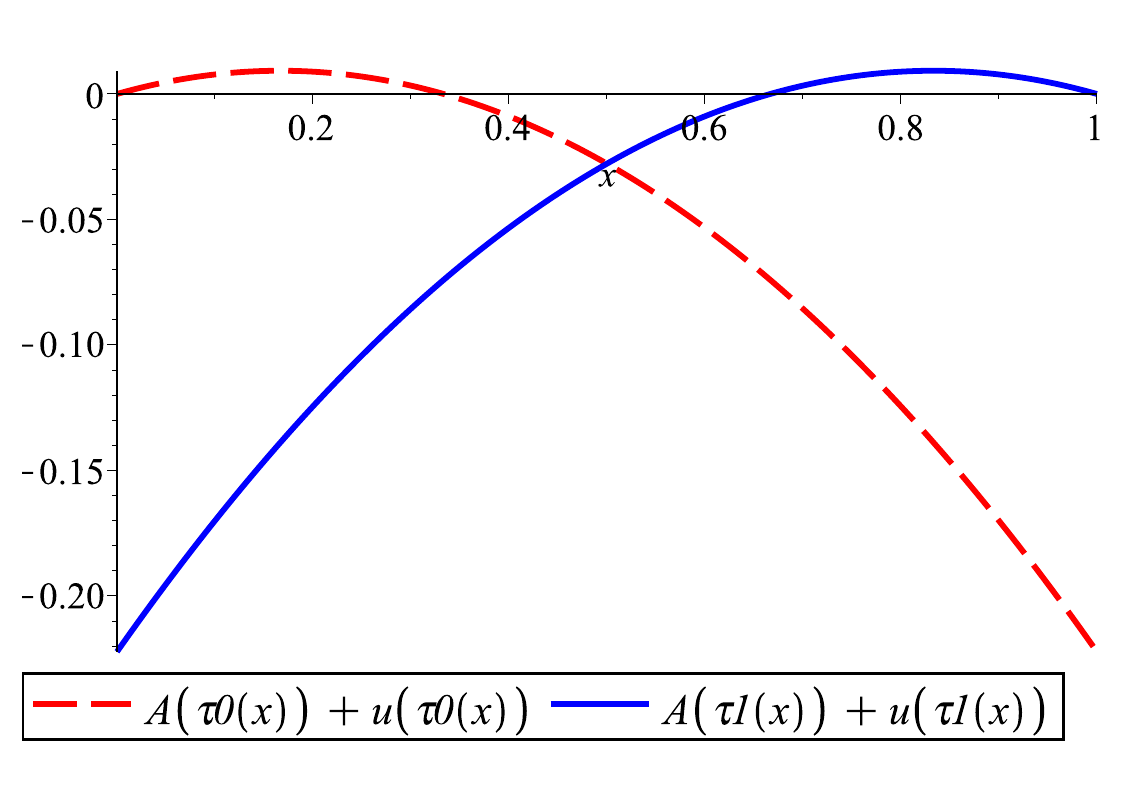}
  \caption{The graph of the functions $(A+u)(\tau_1(x))$ and $(A+u)(\tau_2(x))$.}\label{x9}
\end{figure}

From the graph of $u$ we see that $x=\frac{1}{2}$ is the only turning point. Therefore,  we can take $x_0=0.7$, $\varepsilon = 0.05$ and $f_{\varepsilon}=u + \alpha_{0.05,0.7}$. We also know that $Lip(A)=1$ and $Lip(u)=\frac{2}{9}$, thus, we can take $k=\frac{2}{9}$.

As predicted $|\mathcal{G}(f_{\varepsilon})(x)-u |= \frac{k \varepsilon}{4}=0.0028$ and $|f_{\varepsilon}-u |= \frac{k \varepsilon}{2}=0.0056$.
\end{example}

From Lemma~\ref{L6} we get
\begin{corollary} \label{ftp}
  For any $\varepsilon>0$ there exists a function $f$ which is $\varepsilon$-close to $u$, such that, $\mathcal{G}$ contracts by $1/2$ in $f$, that is, $|\mathcal{G}(f)-u |= \frac{1}{2}  |f-u|$.
\end{corollary}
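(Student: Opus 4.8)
The plan is to produce $f$ as a single bump perturbation of $u$ and then invoke Lemma~\ref{L6}, which already packages the $1/2$ contraction. The first step is to pin down a legitimate base point for the bump: I would fix $x_0\in(0,1)\setminus\{1/2\}$ whose image $T(x_0)$ is \emph{not} a turning point of $A+u$. Such a point exists: if the turning set $Z=\{x:(A+u)(\tau_1(x))=(A+u)(\tau_2(x))\}$ were all of $S^1$, then, using $\psi(u)=u$, we would get $(A+u)(\tau_i(x))=u(x)$ for every $x$ and $i$, i.e.\ $A=u\circ T-u$ would be an exact coboundary, forcing every invariant probability to be maximizing and contradicting the standing hypothesis that the maximizing probability is unique. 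Hence $Z$ is a proper closed subset, $T^{-1}(S^1\setminus Z)$ is open and nonempty, and one may pick $x_0$ there while avoiding the three points $0,1/2,1$. In particular $x_0$ is not a preimage of a turning point, which is exactly the standing assumption under which Lemmas~\ref{L2}, \ref{L3} and \ref{L6} operate.

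Next, given $\varepsilon>0$, I would choose the bump width $\eta>0$ (playing the role of $\varepsilon$ in Lemmas~\ref{L1} and~\ref{L6}) small enough that three requirements hold simultaneously: (i) $\tfrac{k\eta}{2}<\varepsilon$; (ii) the intervals $I=[x_0-\eta,\,x_0+\eta]$ and $T(I)=[T(x_0)-2\eta,\,T(x_0)+2\eta]$ are disjoint; and (iii) $\eta$ is small enough for the continuity step in Lemma~\ref{L3} (persistence of the dominant realizer near $x_0$) to apply. Each requirement only forces $\eta$ below some positive threshold, so a common $\eta$ exists. Set $f:=u+\alpha_{\eta,x_0}$. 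By Lemma~\ref{L1}, $f\in\mathcal{C}_K$ and $|f-u|=\tfrac{k\eta}{2}$, so (i) gives $|f-u|<\varepsilon$: the function $f$ is $\varepsilon$-close to $u$ in the quotient norm.

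Finally, since $x_0$ is not a preimage of a turning point and $\eta$ meets (ii)--(iii), Lemma~\ref{L6} applies verbatim to $f=f_\eta$ and yields $|\mathcal{G}(f)-u|=\tfrac12\,|f-u|$, which is precisely the assertion of the corollary. In this argument the only genuinely non-routine point is the first one --- guaranteeing the existence of an admissible $x_0$, which is why I would spell out the coboundary dichotomy above; after that the statement is a direct specialization of Lemma~\ref{L6}, and there is no further obstacle.
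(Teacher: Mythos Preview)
Your proof is correct and follows the same route as the paper, which simply records the corollary as an immediate consequence of Lemma~\ref{L6}. The one place where you go beyond the paper is the coboundary argument guaranteeing a point $x_0$ that is not a preimage of a turning point; the paper takes the existence of such a point for granted (and supplies it only through explicit examples), so your justification is extra rigor rather than a different approach.
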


We may ask if there exists some neighborhood of $u$ where $|\mathcal{G}(f_{\varepsilon})(x)-u | \leq (1-\delta) |f_{\varepsilon}-u |$. The answer is no. Actually, it is the opposite of that. We can exhibit a sequence $f_{\varepsilon} \to u$, and, $|\mathcal{G}(f_{\varepsilon})(x)-u | = |f_{\varepsilon}-u |$.

\begin{example} \label{E3} We will show an example where $|\mathcal{G}(f_{\varepsilon}) -\mathcal{G}(u )| =      |f_{\varepsilon} -u |$ , $\epsilon>0$, for $f_{\varepsilon}$  as close as you want to the calibrated subaction $u$.

Consider again the dynamics $T(x)=-2x$ \,(mod 1). Let $A(x)=-(x-\frac{1}{2})^2+ \frac{1}{36}$ be the potential and $u$ the subaction
   \[ u(x)= \left\{
       \begin{array}{ll}
         -1/3\,{x}^{2}+x/9,          & 0 \leq x  \leq 1/2 \\
         -1/3\,{x}^{2}+5/9\,x-2/9,  & 1/2\leq x\leq 1.
       \end{array}
     \right.
   \]

We fix $x_0=\frac{2}{3}$. The function $\alpha_{\varepsilon,x_0}$ is zero outside of $I=[\frac{2}{3}-\varepsilon , \; \frac{2}{3}+\varepsilon]$ and $\psi(f_{\varepsilon})- \psi(u)=0$ outside of $T(I)$ by Lemma~\ref{L2}.

We know that $T(\frac{1}{3})=\frac{1}{3}$ and $T(\frac{2}{3})=\frac{2}{3}$.
As we can see in the Figure~\ref{x9}, $\{0,\; \frac{1}{2} , \;1\}$, are the only turning points and the dominant symbol in $x_0=2/3$ is $j=1$. Also, $\tau_2( T(\frac{2}{3}))= \frac{2}{3}$, and thus $i=1 =j$.

Once more $$|\mathcal{G}(f_{\varepsilon}) -u |=\left|\frac{1}{2} \alpha_{\varepsilon,x_0} + \frac{1}{2}(\psi(f_{\varepsilon})- \psi(u))\right|.$$

Since $I \subset T(I)$, we get, by Lemma~\ref{L3}, that $\alpha_{\varepsilon,x_0}$ attains the value $k\varepsilon$ and $\psi(f_{\varepsilon})(x)- \psi(u)(x)=\alpha_{\varepsilon,x_0}(\tau_2(x))$ attains the value $k\varepsilon$ at least in $x_0$. Thus, $\frac{1}{2} \alpha_{\varepsilon,x_0} + \frac{1}{2}(\psi(f_{\varepsilon})- \psi(u))$ attains the value $\frac{k\varepsilon}{2} =|f_{\varepsilon} -u |$ (see Figure~\ref{x8} for $\varepsilon=0.01$ and $x_0=\frac{2}{3}$).

Therefore,     $|\mathcal{G}(f_{\varepsilon}) -u | \geq         |f_{\varepsilon} -u |.$

\begin{figure}[h!]
  \centering
  \includegraphics[width=5cm, height=2cm,trim=0.5in 4in 0.5in 4in, clip]{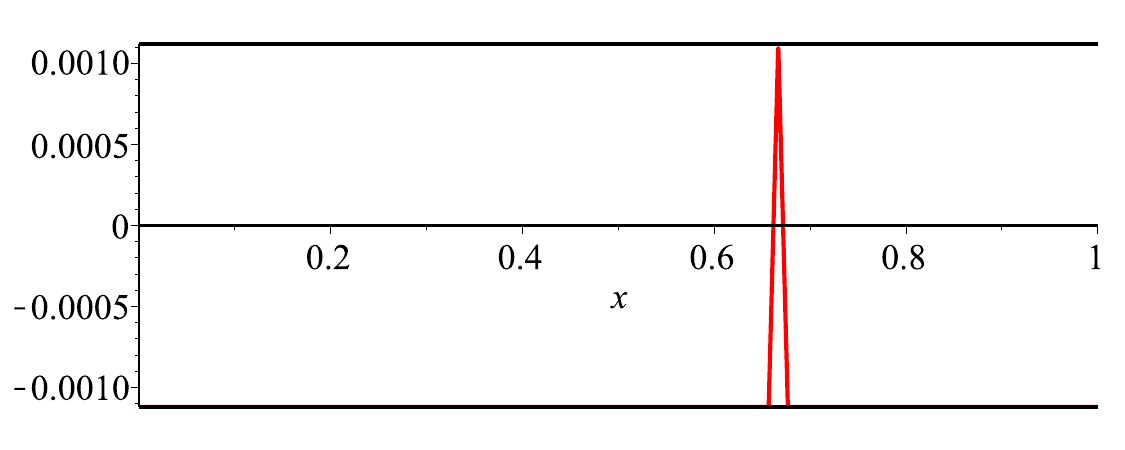} \qquad \includegraphics[width=5cm, height=2cm,trim=0.5in 4in 0.5in 4in, clip]{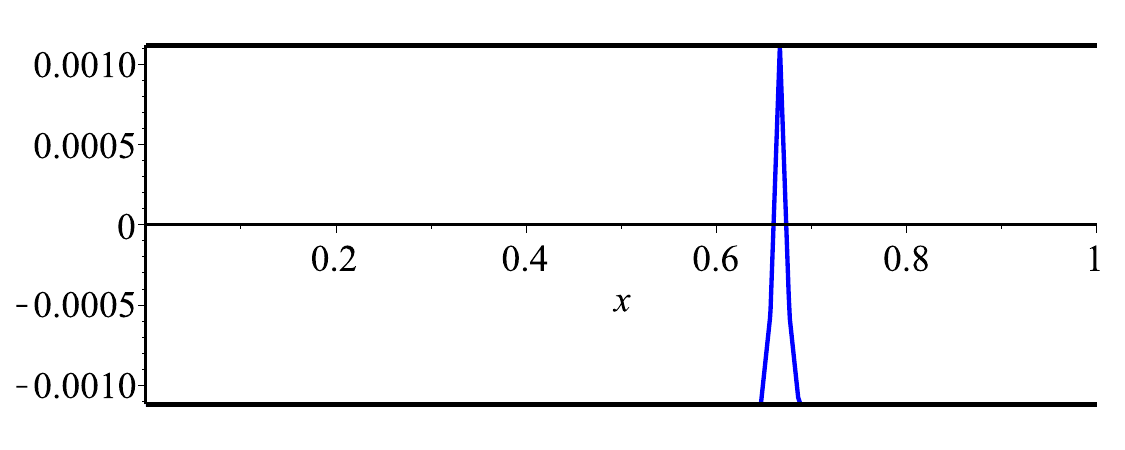}
  \caption{ In the left side the graph of $f_{\varepsilon} \left( x \right) -u \left( x \right) -\frac{k\varepsilon}{2}$ and in the right  side the graph of $1/2\,f_{\varepsilon} \left( x \right) +1/2\,\psi \left( f_{\varepsilon} \right)(x) -u \left( x \right) -\frac{k\varepsilon}{2}$.}\label{x8}
\end{figure}
\end{example}

\end{document}